\newcommand{\mS}{\mathcal{S}}
\newcommand{\mP}{\mathcal{P}}
\newcommand{\mL}{\mathcal{L}}
\newcommand{\mI}{\mathcal{I}}
\newcommand{\mV}{\mathcal{V}}
\newcommand{\mN}{\mathcal{N}}
\newcommand{\mH}{\mathcal{H}}
\newcommand{\mO}{\mathcal{O}}
\newcommand{\N}{\mathbb{N}}
\newcommand{\dist}{\mathrm{d}}
\newcommand{\comp}{\overline}
\newtheorem{thm}{Theorem}[section]
\newtheorem{lem}[thm]{Lemma}
\newtheorem{cor}[thm]{Corollary}
\newtheorem*{main1}{Main Result 1}
\newtheorem*{main2}{Main Result 2}
\begin{document}

\title{On semi-finite hexagons of order $(2,t)$ containing a subhexagon}
\author{Anurag Bishnoi and Bart De Bruyn}
\maketitle

\begin{abstract}
The research in this paper was motivated by one of the most important open problems in the theory of generalized polygons, namely the existence problem for semi-finite thick generalized polygons. We show here that no semi-finite generalized hexagon of order $(2,t)$ can have a subhexagon $H$ of order $2$. Such a subhexagon is necessarily isomorphic to the split Cayley generalized hexagon $H(2)$ or its point-line dual $H^D(2)$.  In fact, the employed techniques allow us to prove a stronger result. We show that every near hexagon $\mathcal{S}$ of order $(2,t)$ which contains a generalized hexagon $H$ of order $2$ as an isometrically embedded subgeometry must be finite. Moreover, if $H \cong H^D(2)$ then $\mathcal{S}$ must also be a generalized hexagon, and consequently isomorphic to either $H^D(2)$ or the dual twisted triality hexagon $T(2,8)$.
\end{abstract}

\bigskip \noindent \textbf{Keywords:} generalized hexagon, near hexagon, valuation\\ 
\textbf{MSC2000:} 51E12, 05B25

\section{Introduction and overview} \label{sec1}

All considered point-line geometries in this paper are {\em partial linear spaces}, these are geometries having the property that every two distinct points are incident with at most one line. The distance between two points $x_1$ and $x_2$ of a partial linear space $\mathcal{S}$ will always be measured in the collinearity graph. This distance will be denoted by $\dist_{\mS}(x_1,x_1)$, or shortly by $\dist(x_1,x_2)$ if no confusion could arise. If $\mathcal{S}$ is a subgeometry of another partial linear space $\mathcal{S}'$, then $\mathcal{S}$ is called {\em isometrically embedded into} $\mathcal{S}'$ whenever $\dist_{\mathcal{S}}(x,y) = \dist_{\mathcal{S}'}(x,y)$ for all points $x$ and $y$ of $\mathcal{S}$.

A \textit{near $2d$-gon} with $d \in \N$ is a partial linear space $\mathcal{S}$ that satisfies the following properties: 
\begin{enumerate} [(NP1)]
\item The collinearity graph of $\mathcal{S}$ is connected and has diameter $d$. 
\item For every point $x$ and every line $L$ there exists a unique point $\pi_L(x)$ incident with $L$ that is nearest to $x$. 
\end{enumerate} 
A {\em near polygon} is a near $2d$-gon for some $d \in \N$. Near polygons were introduced by Shult and Yanushka in \cite{SY}.  A near $0$-gon ($d = 0$) is just a point while a near $2$-gon ($d = 1$) is a line.  The class of near $4$-gons ($d = 2$) coincides with the class of possibly degenerate generalized quadrangles. Generalized quadrangles (GQ's) belong to the family of generalized polygons, an important class of point-line geometries introduced by Tits in \cite{Ti}.

All generalized polygons under investigation in this paper will be generalized hexagons. A \textit{generalized hexagon} can be viewed as a near hexagon which satisfies the following additional properties: 
\begin{enumerate}[(GH1)] 
\item Every point is incident with at least two lines.
\item Any two points at distance 2 have a unique common neighbor.
\end{enumerate}

A partial linear space is said to have \textit{order} $(s,t)$ if there are exactly $s+1$ points on each line and exactly $t+1$ lines through each point. If $s = t$ then we simply say that the geometry has {\em order} $s$. A partial linear space is called \textit{thick} if it has at least three points per line and at least three lines through each point. 

It can be shown that every (possibly infinite) thick generalized polygon has an order $(s,t)$. The standard reference for generalized polygons (cf.~\cite{VM}) contains the proof of this result among other standard results on generalized polygons. One of the important open problems in the theory of generalized polygons is the existence problem for semi-finite thick generalized polygons. These are the polygons which have finite number of points per line but infinitely many lines through each point. It has been shown that semi-finite GQ's of order $(s,t)$ do not exist for $s$ equal to $2$, $3$ or $4$ (\cite{Br,Ca,Ch}), but the problem remains open for other generalized polygons.  This problem has been stated as Problem 5 on the list of open problems on generalized polygons (cf.~\cite[p.~475]{VM}) where it is written that ``\textit{Any other result in this spirit would be an important step towards the solution of the problem. Especially tempting seems to be the case of a hexagon with s = 2.}'' In this paper we solve a very special case of this problem.

\begin{main1} \label{mr1}
Every near hexagon with three points on each line that contains a generalized hexagon $\mH$ of order $2$ as an isometrically embedded subgeometry is finite. In particular, there are no semi-finite generalized hexagons of order $(2,t)$ that contain a generalized hexagon of order 2 as a subgeometry.
\end{main1}

\medskip \noindent Note that if $\mathcal{S}$ and $\mathcal{S}'$ are two generalized hexagons such that $\mathcal{S}$ is a full subgeometry of $\mathcal{S}'$, then $\mathcal{S}$ is always isometrically embedded in $\mathcal{S}'$.

Finite generalized hexagons of order $(2,t)$ exist only if $t \in \{ 1,2,8 \}$. Any generalized hexagon of order $(2,1)$ is isomorphic to the point-line dual of the double of the Fano plane. In~\cite{CT} it was shown that the split Cayley generalized hexagon $H(2)$ and its point-line dual $H^D(2)$ are the only two generalized hexagons of order $2$ up to isomorphism. In~\cite{CT}, it was also shown that there exists up to isomorphism a unique generalized hexagon of order $(2,8)$, namely the dual twisted triality hexagon $T(2,8)$. We will also prove the following characterization of the latter generalized hexagon.

\begin{main2} \label{mr2}
Let $\mN$ be a (possibly infinite) near hexagon with three points on each line that has an isometrically embedded subgeometry isomorphic to $H^D(2)$. Then $\mN$ is a generalized hexagon and hence isomorphic to $H^D(2)$ or $T(2,8)$.
\end{main2}

\bigskip \noindent \textbf{Remarks.} (1) With the used techniques we were not able to show that no semi-finite generalized hexagon of order $(2,t)$ can have a subhexagon of order $(2,1)$.

(2) There are several examples of near hexagons that are not generalized hexagons and do contain $H(2)$ as a proper isometrically embedded full subgeometry (for instance, the dual polar spaces $DW(5,2)$ and $DH(5,4)$).

\bigskip \noindent The main tool used in the proofs will be that of valuations.  Different kinds of valuations were introduced in the papers \cite{bdb:Gn,bdb:polygonal,bdb-pvdc:1} and have since been used to obtain several classification results for near polygons, see \cite{bdb:octagon,bdb-pvdc:2}. We recall the basic notions of the theory of valuations in Section~\ref{sec2}. In Section \ref{sec3}, we give a description of the valuation geometry of the generalized hexagon $H^D(2)$ and use that information to prove those parts of the main results that involve a subgeometry isomorphic to $H^D(2)$. In Section \ref{sec4}, we realize similar goals for the generalized hexagon $H(2)$. The computer algorithms used for our computations are described in the appendix. These algorithms were implemented in the computer programming language GAP~\cite{gap}. The full GAP code we used is available online, see \cite{Bi-bdb}. 

\section{Preliminaries}  \label{sec2}

A partial linear space $\mathcal{S}$ will be viewed here as a triple $(\mP,\mL,\mI)$, where $\mP$ denotes the non-empty point set, $\mL$ the line set and $\mI \subseteq \mP \times \mL$ the incidence relation. A partial linear space $\mS = (\mP, \mL, \mI)$ is a \textit{subgeometry} of another partial linear space $\mS' = (\mP ', \mL ', \mI ')$ if $\mP \subseteq \mP'$, $\mL \subseteq \mL '$ and $\mI = \mI ' \cap (\mP  \times \mL)$. A subgeometry is called \textit{full} if for every line $L$ in $\mL$ the set $\{x \in \mP : x ~\mI~ L\}$ is equal to $\{x \in \mP' : x ~\mI'~ L\}$. Recall that if $d_{\mS}(x,y) =  d_{\mS'}(x,y)$ for every two points $x,y$ in $\mP$, then we will say that 
$\mS$ is isometrically embedded into $\mS'$. If $x$ is a point and $i \in \N$, then $\Gamma_i(x)$ denotes the set of points of $\mathcal{S}$ at distance $i$ from $x$. If $x$ is a point and $X$ a nonempty set of points, then $\dist(x,X)$ denotes the minimum distance from $x$ to a point of $X$. If $\emptyset \not= X \subseteq \mathcal{P}$ and $i \in \N$, then $\Gamma_i(X)$ denotes the set of points at distance $i$ from $X$. 

Suppose $\mathcal{N}$ is a near polygon. A {\em quad} of $\mathcal{N}$ is a set $Q$ of points satisfying:
\begin{enumerate}[(Q1)]
\item The maximum distance between two points of $Q$ is equal to 2.
\item If $x$ and $y$ are two collinear points of $Q$, then every point of the unique line through $x$ and $y$ is also contained in $Q$.
\item If $x$ and $y$ are two noncollinear points of $Q$, then every common neighbor of $x$ and $y$ also belongs to $Q$. 
\item The subgeometry of $\mathcal{N}$ determined by those points and lines that are contained in $Q$ is a nondegenerate generalized quadrangle. 
\end{enumerate}
Sufficient conditions for the existence of quads were given by Shult and Yanushka \cite{SY}. In Proposition 2.5 of that paper it was shown that if $a$ and $b$ are two points of a near polygon at distance $2$ from each other, and if $c$ and $d$ are two common neighbours of $a$ and $b$ such that at least one of the lines $ac$, $ad$, $bc$, $bd$ contains at least three points, then $a$ and $b$ are contained in a unique quad. 

Now, suppose that $\mathcal{N}$ is a near polygon with three points per line, and that $a$ and $b$ are two points at distance 2 from each other. If $a$ and $b$ have at least two common neighbors, then $a$ and $b$ are contained in a unique quad, which is a generalized quadrangle of order $(2,t)$. The number $t$ is finite (there are no semi-finite GQ's of order $(2,t)$) and equal to either $1$, $2$ and $4$, corresponding to the cases where the quad is a $(3 \times 3)$-grid, a GQ isomorphic to $W(2)$ or a GQ isomorphic to $Q(5,2)$, respectively. This implies the following.

\begin{lem} \label{lem2.1}
If $\mathcal{N}$ is a (possibly infinite) near polygon with three points on each line, then every two points at distance $2$ have either $1$, $2$, $3$ or $5$ common neighbors.  
\end{lem}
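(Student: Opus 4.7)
The plan is to dichotomize on the number of common neighbours of two points $a,b$ at distance $2$: either there is exactly one such neighbour, in which case the count is $1$ and we are done, or there are at least two, in which case we invoke the quad machinery quoted in the text.

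In the second case, let $c$ and $d$ be two distinct common neighbours of $a$ and $b$. Since every line has three points, in particular each of $ac,ad,bc,bd$ contains at least three points, so by Shult and Yanushka's Proposition $2.5$ (quoted in the excerpt) the pair $\{a,b\}$ lies in a unique quad $Q$. By property (Q4) the quad $Q$ is a nondegenerate generalized quadrangle, and since $Q$ is a full subgeometry of $\mathcal{N}$ (by (Q2)) it has $3$ points on each line; hence $Q$ has an order $(2,t)$ for some cardinal $t$.

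I would then appeal to the (already cited) results of Cameron, Brouwer and Kantor ruling out semi-finite generalized quadrangles of order $(2,t)$ to conclude that $t$ is finite, and then to the classification of finite generalized quadrangles with three points per line to conclude $t\in\{1,2,4\}$, so $Q$ is either a $3\times 3$ grid, $W(2)$ or $Q(5,2)$. In any generalized quadrangle of order $(2,t)$, two noncollinear points have exactly $t+1$ common neighbours, so $a$ and $b$ have exactly $t+1\in\{2,3,5\}$ common neighbours \emph{inside} $Q$.

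It remains to observe that every common neighbour of $a$ and $b$ in $\mathcal{N}$ actually lies in $Q$: any such neighbour is collinear with both $a$ and $b$, hence at distance at most $2$ from each point of $Q$ and in particular is a common neighbour of the two noncollinear points $a$ and $b$, so it belongs to $Q$ by (Q3). Thus the number of common neighbours equals $t+1\in\{2,3,5\}$, completing the list $\{1,2,3,5\}$. The only mildly delicate step is ensuring $t$ is finite, and this is handled verbatim by the cited non-existence results for semi-finite GQs with $s=2$.
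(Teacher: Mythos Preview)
Your argument is correct and follows essentially the same route as the paper: the paragraph preceding the lemma already sketches exactly this proof (Shult--Yanushka quad, then the nonexistence of semi-finite GQ's of order $(2,t)$ forcing $t\in\{1,2,4\}$). Your only addition is making explicit via (Q3) that every common neighbour of $a$ and $b$ in $\mathcal{N}$ must lie in the quad $Q$, which the paper leaves implicit; one small quibble is that the relevant nonexistence result for $s=2$ is due to Cameron (the paper cites Brouwer and Cherlin for $s=3,4$), not Kantor.
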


The main theme of this paper is to study and classify near $2d'$-gons $\mN'$ which contain a given sub-near-$2d$-gon $\mN$ isometrically embedded in them as a full subgeometry.  We do so with the help of certain integer valued functions defined on the points of $\mN$.

Let $\mN = (\mP, \mL, \mI)$ be a near $2d$-gon. A function $f: \mP \rightarrow \mathbb{Z}$ is called a \textit{semi-valuation} of $\mN$ if every line $L$ has a unique point $x_L$ with minimal $f$-value and every other point on $L$ has $f$-value $f(x_L) + 1$. In this paper, a {\em valuation}\footnote{In other papers, extra conditions are imposed for a semi-valuation to be a valuation.}    of $\mN$ will be a semi-valuation for which the minimal value is equal to 0.   If $f$ is a valuation of $\mN$, then we denote by $M_f$ the maximal value attained by $f$ and by $\mathcal{O}_f$ the set of points with value 0. The set of points of $\mN$ with non-maximal $f$-value is a {\em hyperplane} $H_f$ of $\mN$, that is, a set of points distinct from the whole point set having the property that each line has either one or all its points in it. Two (semi-)valuations $f_1$ and $f_2$ of $\mN$ will be called {\em isomorphic} if there exists an automorphism $\theta$ of $\mN$ such that $f_2 = f_1 \circ \theta$. We list some examples of valuations.
\begin{itemize}
\item Fix a point $p$ of $\mN$ and define $f$ by $f(x) = d(x,p)$.  It follows directly from (NP2) that $f$ is a valuation of $\mN$.  It is called the \textit{classical valuation with center $x$}. 
\item A subset $O$ of points is called an \textit{ovoid} of $\mN$ if every line of $\mN$ intersects $O$ in a single point.  Fix an ovoid $O$ of $\mN$ (if there exists one) and define $f$ by $f(x) = 0$ for all $x$ in $O$ and $f(x) = 1$ for every other point of $\mN$.  Then $f$ is a valuation which is called an \textit{ovoidal valuation}. 
\end{itemize}
Two valuations $f_1$ and $f_2$ of $\mN$ are called \textit{neighboring valuations} if there exists an $\epsilon \in \mathbb{Z}$ (necessarily belonging to $\{ -1,0,1 \}$) such that $|f_1(x) - f_2(x) + \epsilon| \leq 1$ for every point $x$ of $\mathcal{N}$. The number $\epsilon$ is uniquely determined, except when $f_1=f_2$, in which case there are three possible values for $\epsilon$, namely $-1$, $0$ and $1$. 

Suppose that every line of $\mN$ is incident with precisely three points. Suppose also that $f_1$ and $f_2$ are two neighboring valuations of $\mN$ and let $\epsilon \in \{ -1,0,1 \}$ such that $|f_1(x) - f_2(x) + \epsilon| \leq 1$ for every point $x$ of $\mathcal{N}$. If $x$ is a point such that $f_1(x) = f_2(x)-\epsilon$, then we define $f_3'(x) := f_1(x)-1 = f_2(x)-\epsilon-1$. If $x$ is a point of $\mathcal{N}$ such that $f_1(x) \not= f_2(x)-\epsilon$, then $f_3'(x)$ denotes the larger among $f_1(x)$ and $f_2(x)-\epsilon$. If we put $f_3(x) := f_3(x)-m$, where $m \in \{ -1,0,1 \}$ is the minimal value attained by $f_3'$, then it can be shown that $f_3$ is again a valuation, which we will  denote by $f_1 \ast f_2$. The map $f_1 \ast f_2$ is well-defined: if $f_1=f_2$, then there are three possibilities for $\epsilon$, but for each of them, we would have $f_1 \ast f_2 = f_1 = f_2$.  The following properties hold: (i) $f_2 \ast f_1 = f_1 \ast f_2 = f_3$; (ii) $f_1$ and $f_3$ are neighboring valuations and $f_1 \ast f_3 = f_2$; (iii) $f_2$ and $f_3$ are neighboring valuations and $f_2 \ast f_3 = f_1$. For more information about neighboring valuations and proofs of the above facts, we refer to \cite{bdb:Gn}. The following holds.

\begin{lem} \label{lem2.2}
If $\mN = (\mP, \mL, \mI)$ is a near polygon which is an isometrically embedded full subgeometry of a near polygon $\mN' = (\mP', \mL', \mI')$, then the following holds: 
\begin{enumerate}
\item[$(1)$] For every point $x$ in $\mP'$ the function $f_x: \mP \rightarrow \mathbb{N}$ defined by $f_x(y) := d(x,y) - d(x, \mP)$ is a valuation of $\mN$.
\item[$(2)$] For every pair of distinct collinear points $x_1$ and $x_2$ in $\mN'$, the valuations $f_{x_1}$ and $f_{x_2}$ are neighboring.
\item[$(3)$] If every line of $\mN'$ is incident with three points and if $\{ x_1,x_2,x_3 \}$ is a line of $\mN'$, then $f_{x_1} \ast f_{x_2} = f_{x_3}$. In particular, if two of $f_{x_1},f_{x_2},f_{x_3}$ coincide then they are all equal.
\end{enumerate}
\end{lem}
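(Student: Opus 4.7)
The plan is to prove the three parts in order. The main tools are: (a) since $\mN$ is a full subgeometry of $\mN'$, every line of $\mN$ coincides, as a point set, with a line of $\mN'$; (b) the isometric embedding gives $d_{\mN}(x,y) = d_{\mN'}(x,y)$ for all $x,y \in \mP$. Throughout I will write $d$ for $d_{\mN'}$.

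For (1), non-negativity of $f_x$ and attainment of the value $0$ are immediate from the definition of $d(x,\mP)$ as a minimum. For the semi-valuation property, I apply (NP2) in $\mN'$ to any line $L$ of $\mN$: this yields a unique point $\pi_L(x) \in L$ at minimum $\mN'$-distance to $x$, with every other point of $L$ at distance $d(x,\pi_L(x)) + 1$. Subtracting the constant $d(x,\mP)$ preserves both the uniqueness of the minimum and the unit gap, so $f_x$ is a valuation of $\mN$.

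For (2), adjacency $x_1 \sim x_2$ in $\mN'$ gives $d(x_1,y) - d(x_2,y) \in \{-1,0,1\}$ for every $y \in \mP$ by the triangle inequality, and similarly $d(x_1,\mP) - d(x_2,\mP) \in \{-1,0,1\}$. Setting $\epsilon := d(x_1,\mP) - d(x_2,\mP)$, the quantity $f_{x_1}(y) - f_{x_2}(y) + \epsilon$ telescopes to $d(x_1,y) - d(x_2,y)$, whose absolute value is at most $1$.

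For (3), I unpack the definition of $\ast$ using the $\epsilon$ from (2), observing that $f_{x_1}(y) = f_{x_2}(y) - \epsilon$ if and only if $d(x_1,y) = d(x_2,y)$. Fix $y \in \mP$. If $d(x_1,y) = d(x_2,y)$, then by (NP2) applied to the line $\{x_1,x_2,x_3\}$ of $\mN'$ the unique nearest point to $y$ must be $x_3$, so $d(x_3,y) = d(x_1,y) - 1$, giving $f_3'(y) = f_{x_1}(y) - 1 = d(x_3,y) - d(x_1,\mP)$. If $d(x_1,y) \neq d(x_2,y)$, then (NP2) forces one of $x_1, x_2$ to be the unique nearest point and places $x_3$ at the larger of the two distances, so $d(x_3,y) = \max(d(x_1,y), d(x_2,y))$, and again $f_3'(y) = d(x_3,y) - d(x_1,\mP)$. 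In either case $f_3'(y) = d(x_3,y) - d(x_1,\mP)$; hence the minimum $m$ of $f_3'$ equals $d(x_3,\mP) - d(x_1,\mP)$, which lies in $\{-1,0,1\}$ by collinearity of $x_1$ and $x_3$, and $f_{x_1} \ast f_{x_2} = f_3' - m = d(x_3,\cdot) - d(x_3,\mP) = f_{x_3}$. The last clause follows from the properties of $\ast$ recorded in the excerpt: if say $f_{x_1} = f_{x_2}$, then $f_{x_3} = f_{x_1} \ast f_{x_2} = f_{x_1}$ by the convention that $f \ast f = f$.

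The only real obstacle is keeping the sign convention for $\epsilon$ aligned with the translation by $d(x_i,\mP)$; once that is pinned down, the analysis in (3) collapses to the two configurations of $(d(x_1,y), d(x_2,y), d(x_3,y))$ allowed by (NP2) on the line $\{x_1,x_2,x_3\}$ of $\mN'$.
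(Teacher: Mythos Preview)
Your proof is correct and follows essentially the same approach as the paper's: (NP2) in $\mN'$ gives the semi-valuation property and the two-case split for $\dist(x_3,y)$ in part~(3), while the triangle inequality handles part~(2). You have simply supplied the details (the explicit choice $\epsilon = d(x_1,\mP) - d(x_2,\mP)$ and the verification that $f_3' = d(x_3,\cdot) - d(x_1,\mP)$) that the paper leaves implicit.
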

\begin{proof}
These properties were already (implicitly) proved in the literature. Claim (1) is a consequence of (NP2). Claim (2) follows from the fact that $|\dist(x,x_1)-\dist(x,x_2)| \leq 1$ for every point $x$ of $\mathcal{N}$. Claim (3) is again a consequence of (NP2). Indeed, if $x$ is a point of $\mathcal{N}$, then $\dist(x,x_3) = \dist(x,x_1)-1$ if $\dist(x,x_1)=\dist(x,x_2)$ and $\dist(x,x_3) = \max \{  \dist(x,x_1) , \dist(x,x_2) \}$ if $\dist(x,x_1) \not= \dist(x,x_2)$. 
\end{proof}

\section{Near hexagons containing an isometrically embedded $H^D(2)$}  \label{sec3}

In this section, we study near hexagons of order $(2,t)$ that contain the generalized hexagon $H^D(2)$ as an isometrically embedded subgeometry. 

\subsection{The valuation geometry of $H^D(2)$} \label{sec3.1}

The valuation geometry $\mathcal{V}$ of $H^D(2)$ is defined as the partial linear space whose points are the valuations of $H^D(2)$ and whose lines are the triples $\{ f_1,f_2,f_3 \}$, where $f_1$, $f_2$ and $f_3$ are three mutually distinct valuations of $H^D(2)$ such that $f_1$ and $f_2$ are neighboring valuations and $f_3 = f_1 \ast f_2$. We have used the computer algebra system GAP to determine all valuations and all lines of the valuation geometry $\mathcal{V}$, see \cite{Bi-bdb} and the appendix. Our results are summarized in Tables \ref{tab:1} and \ref{tab:2}. 

In Table~\ref{tab:1} we give a unique label, called ``Type'', to each isomorphism class of valuations. The ``$\#$'' column records the number of distinct valuations (points of $\mV$) of each type, i.e., the size of each class and the ``Value Distribution'' column records the number of points of $H^D(2)$ of value $i$ for every $i \in \{0, 1, 2, 3 \}$. In Table~\ref{tab:2}  we record the number of distinct lines of each type through a given point in $\mV$, where the type of a line of $\mV$ is the lexicographically ordered string of the types of the points incident with it.

\begin{table}[!htbp]
\begin{center}
\begin{tabular}{ | l || l | l | l | l | l | }
 \hline
  Type & $\#$ & $M_f$ & $|\mO_f|$ & $|H_f|$  & Value Distribution \\ \hline \hline
  $A$ & $63$ & $3$ & $1$ & $31$ & $[1, 6, 24, 32]$  \\ \hline
  $B$ & $252$ & $3$ & $1$ & $47$ & $[1, 14, 32, 16]$ \\ \hline
  $C$ & $252$ & $2$ & $1$ & $23$ & $[ 1, 22, 40, 0 ]$ \\ \hline
  $D$ & $1008$ & $2$ & $5$ & $31$ & $[5, 26, 32, 0]$ \\ \hline
\end{tabular}
\end{center}
\caption{The valuations of $H^D(2)$}
\label{tab:1}
\end{table}

\begin{table}[!htbp]
\begin{center}
\begin{tabular}{| l || l | l | l | l |}
\hline 
Type & $A$ & $B$ & $C$ & $D$ \\ \hline \hline

$AAA$ & $3$ & $-$ & - & - \\ \hline

$ABB$ & $2$ & $1$ & - & - \\ \hline 

$ACC$ & $2$ & - & 1 & - \\ \hline 

$ADD$ & $24$ & - & - & $3$ \\ \hline 

$BBB$ & - & $4$ & - & - \\ \hline

$BCC$ & - & $1$ & $2$ & - \\ \hline 

$BDD$ & - & $4$ & - & $2$ \\ \hline 

$CCC$ & - & - & 8 & - \\ \hline 

$CCD$ & - & - & $40$ & $5$ \\ \hline

$CDD$ & - & - & $4$ & $2$ \\ \hline

$DDD$ & - & - & - & $10$ \\ \hline   
\end{tabular}
\end{center}
\caption{The lines of the valuation geometry of $H^D(2)$} 
\label{tab:2}
\end{table}

\bigskip \noindent With the aid of a computer, we have also proved the following facts about the valuation geometry $\mathcal{V}$:

\begin{lem} \label{lem3.1}
Let $\mathcal{V}'$ be the subgeometry of $\mathcal{V}$ obtained by taking only the points of Type C and the lines of Type CCC. Then:
\begin{itemize}
\item[$(a)$] The geometry $\mathcal{V}'$ is connected.
\item[$(b)$] If $f_1$ and $f_2$ are distinct collinear points of $\mathcal{V}'$, then the unique points in $\mathcal{O}_{f_1}$ and $\mathcal{O}_{f_2}$ lie at distance 3 from each other.
\item[$(c)$] Suppose $G$ is a $(3 \times 3)$-subgrid of $\mathcal{V}'$. Let $f_1$ and $f_2$ be two noncollinear points of $G$, and let $x_i$ with $i \in \{ 1,2 \}$ denote the unique point with $f_i$-value $0$. Then $\dist(x_1,x_2)=3$.   
\end{itemize}
\end{lem}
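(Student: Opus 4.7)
The plan is to verify parts (a)--(c) by direct computation on the data of Tables~\ref{tab:1} and~\ref{tab:2}, in the spirit of the appendix and the GAP code of~\cite{Bi-bdb}. First, I would produce, once and for all, an explicit list of the $252$ valuations of Type $C$, the list of all CCC-type lines (there are $252 \cdot 8 / 3 = 672$ such lines by Table~\ref{tab:2}), and the distance matrix of $H^D(2)$; the rest is bookkeeping on small data.

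For part (a), I would build the graph $G$ on vertex set $\mathcal{V}_C$ in which two Type $C$ valuations are adjacent whenever they share a CCC-line, and run a breadth-first search from an arbitrary vertex. Connectivity of $\mathcal{V}'$ is equivalent to the BFS reaching all $252$ vertices. For part (b), one loops over each of the $672$ CCC-lines $\{f_1, f_2, f_3\}$; since $|\mO_{f_i}| = 1$, each $f_i$ has a unique value-$0$ point $x_i$, and a single look-up of $\dist(x_i,x_j)$ in the distance matrix of $H^D(2)$ suffices to confirm that the answer is $3$ in every case. As a structural sanity check, if $f_1 \neq f_2$ are neighboring Type $C$ valuations with $\mO_{f_i}=\{x_i\}$, the defining inequality $|f_1(x)-f_2(x)+\epsilon| \le 1$ evaluated at $x_1$ and $x_2$ forces $\epsilon = 0$ and $f_1(x_2) = f_2(x_1) = 1$, which already rules out $x_1=x_2$ and is consistent with the Type $C$ value distribution in Table~\ref{tab:1}.

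For part (c), I would enumerate the $3 \times 3$-subgrids of $\mathcal{V}'$ and check each. A natural search is: for every pair $(L,L')$ of distinct CCC-lines meeting in a point $f$, write $L = \{f,g_1,g_2\}$ and $L' = \{f,h_1,h_2\}$; decide for each of the four pairs $(g_i,h_j)$ whether it spans a CCC-line; if all four do, collect the third points of those lines and verify that, together with $f, g_1, g_2, h_1, h_2$, they constitute a genuine $3 \times 3$-subgrid (nine distinct vertices, six lines). For each grid thus detected, one evaluates $\dist(x_1,x_2)$ for each of its noncollinear pairs of vertices $(f_1,f_2)$ by a look-up in the distance matrix of $H^D(2)$, and confirms that the answer is always $3$.

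The main obstacle is (c): organising the grid enumeration so that no grid is missed and no grid is processed more than once --- a single configuration of nine vertices can arise from many initial pairs $(L,L')$, so one would normalise each grid to a canonical representation before recording it. Apart from this bookkeeping, the computations are routine: at most $252 \cdot \binom{8}{2} = 7056$ initial pairs $(L,L')$ need be inspected, and every distance evaluation reduces to a table look-up in the precomputed distance matrix of $H^D(2)$.
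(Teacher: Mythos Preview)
Your proposal is correct and matches the paper's approach: Lemma~\ref{lem3.1} is established in the paper by direct computer verification (see the sentence preceding the lemma and the appendix), and your plan---build $\mathcal{V}'$ explicitly, BFS for connectivity, loop over the $672$ CCC-lines for~(b), enumerate the $(3\times 3)$-subgrids for~(c)---is exactly this. The paper additionally records that each point of $\mathcal{V}'$ lies in precisely $16$ such subgrids, confirming your enumeration is finite and small; and in a later Remark it sketches a non-computational argument for~(b) via the embedding of $H^D(2)$ in $T(2,8)$, but this is offered as an aside rather than the proof of record.
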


\bigskip \noindent Observe that the valuations $f_1$ and $f_2$ mentioned in Lemma \ref{lem3.1} have Type C and so each of them has a unique point with value 0, see Table \ref{tab:1}. The geometry $\mathcal{V}'$ has $(3 \times 3)$-subgrids. In fact, our computer computations showed that through each point of $\mathcal{V}'$, there are precisely 16 $(3 \times 3)$-subgrids. The structure of the valuation geometry as described in Tables \ref{tab:1}, \ref{tab:2} and Lemma \ref{lem3.1} will suffice to derive the desired results without further assistance of a computer.    

\subsection{Proof of those main results that involve a subhexagon $H^D(2)$} \label{sec3.2}

In this section, $\mN$ denotes a near hexagon that contains a subhexagon $\mH$ isomorphic to $H^D(2)$ isometrically embedded in it as a full proper subgeometry. If $x$ is a point of $\mN$ then by Lemma \ref{lem2.2}(1) the map $y \mapsto f(x,y) - \dist(x,\mathcal{H})$ defines a valuation $\psi(x) := f_x$ of $\mH$, the so-called {\em valuation of} $\mH$ {\em induced by} $x$. The {\em type of a point} of $\mN$ is defined to be the type of the induced valuation. The {\em type of a line} $L$ of $\mN$ is defined to be the lexicographically ordered string of the types of the points incident with $L$. We denote by $\mathcal{V}$ the valuation geometry of $\mathcal{H}$. Points and lines of $\mathcal{V}$ will also be called {\em $\mathcal{V}$-points} and {\em $\mathcal{V}$-lines}. Recall that by Lemma \ref{lem2.2}(3), if $\{ x_1,x_2,x_3 \}$ is a line of $\mN$, then either $f_{x_1}=f_{x_2}=f_{x_3}$ or $\{ f_{x_1},f_{x_2},f_{x_3} \}$ is a line of $\mV$. If the latter case occurs, then we call $\psi(L) := \{ f_{x_1},f_{x_2},f_{x_3} \}$ the line of $\mV$ induced by $L$. We have: 

\begin{lem} \label{lem3.2}
Every point in $\mN$ has one of the four types mentioned in Table~\ref{tab:1}. For a line $L$ in $\mN$ if all the points on $L$ induce distinct valuations then $L$ has one of the eleven types mentioned in Table~\ref{tab:2}. 
\end{lem}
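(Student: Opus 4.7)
The statement is essentially a direct bookkeeping consequence of Lemma~\ref{lem2.2} together with the exhaustive classification of valuations of $H^D(2)$ recorded in Tables~\ref{tab:1} and~\ref{tab:2}, so the plan is simply to spell out how each of the two assertions is an immediate translation from valuations of $\mN$ into the valuation geometry $\mV$. There is no genuine obstacle, only the need to justify that the induced objects land inside the tables.

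For the first assertion, I would argue as follows. Fix an arbitrary point $x$ of $\mN$. By Lemma~\ref{lem2.2}(1) applied to the isometric full embedding $\mH \hookrightarrow \mN$, the function $\psi(x) = f_x : \mP(\mH) \to \N$ given by $f_x(y) = \dist(x,y) - \dist(x, \mP(\mH))$ is a valuation of $\mH \cong H^D(2)$. By definition the type of $x$ is the type of $f_x$, and Table~\ref{tab:1} lists all isomorphism classes of valuations of $H^D(2)$ (as verified by the computer enumeration referenced in Section~\ref{sec3.1}). Hence $\psi(x)$ belongs to exactly one of the classes $A$, $B$, $C$, $D$, which proves the first claim.

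For the second assertion, let $L = \{x_1, x_2, x_3\}$ be a line of $\mN$ such that $f_{x_1}$, $f_{x_2}$, $f_{x_3}$ are pairwise distinct. By Lemma~\ref{lem2.2}(3), we have $f_{x_1} \ast f_{x_2} = f_{x_3}$, and in particular $f_{x_1}$ and $f_{x_2}$ are neighboring; combined with the distinctness hypothesis, the triple $\{f_{x_1}, f_{x_2}, f_{x_3}\}$ is by definition a line of the valuation geometry $\mV$. Table~\ref{tab:2} records, for each isomorphism class of $\mV$-point, the multiplicity of every isomorphism class of $\mV$-line through it, and a line type appears there if and only if it is realized in $\mV$. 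The eleven line types listed therefore exhaust all possible isomorphism classes of $\mV$-lines, so the induced line $\psi(L) = \{f_{x_1}, f_{x_2}, f_{x_3}\}$ has one of these eleven types. Since the type of $L$ is by definition the type of $\psi(L)$, this finishes the proof.

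The only ingredient that is not formal is the completeness of Tables~\ref{tab:1} and~\ref{tab:2}; this is the content of the GAP computation described in the appendix and is assumed here. Given that input, the lemma requires nothing beyond unpacking the definitions and invoking Lemma~\ref{lem2.2}.
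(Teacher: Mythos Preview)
Your proposal is correct and matches the paper's treatment: the paper states Lemma~\ref{lem3.2} without proof, regarding it as an immediate consequence of Lemma~\ref{lem2.2} together with the completeness of the computer-generated Tables~\ref{tab:1} and~\ref{tab:2}, which is exactly what you have spelled out.
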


\begin{lem} \label{lem3.3}
\begin{enumerate}
\item[$(1)$] Every point of $\mN$ has distance at most $1$ from $\mH$. 
\item[$(2)$] Every point $x$ of $\mathcal{H}$ has Type A, and the valuation $f_x$ is classical with center $x$.  
\item[$(3)$] Every point $y$ at distance $1$ from $\mathcal{H}$ has Type C and is collinear with a unique point $y'$ of $\mathcal{H}$. Moreover, $\mathcal{O}_{f_y} = \{ y' \}$.
\end{enumerate}
\end{lem}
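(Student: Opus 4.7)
The plan is to treat the three claims in the order $(2)$, $(1)$, $(3)$, using Tables~\ref{tab:1} and~\ref{tab:2} as the main arithmetic input. For $(2)$, the isometric embedding gives $f_x(y)=\dist(x,y)$ for every $y\in\mH$, so $f_x$ is the classical valuation centred at $x$; its value distribution on $H^D(2)$ is $[1,6,24,32]$, which matches only Type~$A$ of Table~\ref{tab:1}. For $(1)$, I would combine the upper bound $M_{f_p}\leq 3-\dist(p,\mH)$, valid because $f_p(q)=\dist(p,q)-\dist(p,\mH)\leq 3-\dist(p,\mH)$ for every $q\in\mH$ (since $\mN$ has diameter~$3$), with the observation that every valuation listed in Table~\ref{tab:1} satisfies $M_f\geq 2$; the resulting $2\leq 3-\dist(p,\mH)$ forces $\dist(p,\mH)\leq 1$.

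For $(3)$, the same upper bound yields $M_{f_y}\leq 2$, so $f_y$ has Type~$C$ or Type~$D$. Since $\mO_{f_y}$ is precisely the set of neighbours of $y$ in $\mH$, the statement reduces to $|\mO_{f_y}|=1$, i.e.\ to excluding Type~$D$. As a preliminary I would note that any line of $\mN$ through $y$ contains at most one point of $\mH$: otherwise two points of $\mH$ would be collinear in $\mN$, so by isometric embedding and the uniqueness of a joining line, their line in $\mH$ would coincide with that in $\mN$, forcing $y\in\mH$ by fullness, a contradiction.

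The core argument is a counting in $\mV$. Suppose for contradiction that $f_y$ has Type~$D$; then $y$ has five distinct neighbours $y_1,\dots,y_5\in\mH$, lying on five distinct lines $L_i=\{y,y_i,a_i\}$ of $\mN$ with each $a_i\notin\mH$. By Lemma~\ref{lem2.2}$(3)$ the three valuations $f_y,f_{y_i},f_{a_i}$ cannot all coincide (as $f_{y_i}$ is Type~$A$ and $f_y$ is Type~$D$), hence are pairwise distinct and form a line of $\mV$. Scanning Table~\ref{tab:2} one sees that the only line type through an $A$-point that also passes through a $D$-point is $ADD$, so $f_{a_i}$ is of Type~$D$ and each $L_i$ induces an $ADD$-line through $f_y$. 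These five $ADD$-lines are pairwise distinct, for equality of two would force their $A$-components to coincide, hence $y_i=y_j$ since the centre of a classical valuation is unique. But Table~\ref{tab:2} records exactly $3$ lines of type~$ADD$ through any Type~$D$ point, contradicting $5\leq 3$. Therefore $f_y$ has Type~$C$ with $|\mO_{f_y}|=1$, yielding the unique neighbour $y'$. The main obstacle is isolating this exact bookkeeping inside $\mV$: once $ADD$ is recognised as the only line type connecting Types~$A$ and~$D$, and the entry $3$ at position $(ADD,D)$ of Table~\ref{tab:2} is juxtaposed with the five external lines through $y$, the contradiction is immediate.
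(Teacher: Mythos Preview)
Your proof is correct and follows essentially the same route as the paper: the isometric embedding yields classical valuations for points of $\mH$, the bound $M_{f_p}\le 3-\dist(p,\mH)$ combined with $M_f\ge 2$ from Table~\ref{tab:1} gives $(1)$, and the exclusion of Type~$D$ in $(3)$ is carried out by exactly the same $ADD$-line count (five induced $ADD$-lines through $f_y$ versus the three available in Table~\ref{tab:2}). Your added remark that each line through $y$ meets $\mH$ in at most one point is a small clarification the paper leaves implicit, but otherwise the arguments coincide.
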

\begin{proof}
If $x$ is a point of $\mathcal{H}$, then $f_x(u) = \dist_{\mN}(x,u) = \dist_{\mH}(x,u)$ for every point $u$ of $\mH$, showing that $f_x$ is classical with center $x$. If $y$ is a point not contained in $\mH$, then the maximal distance from $y$ to a point of $\mH$ is equal to $\dist(y,\mH) + M_{f_y}$. Since this maximal distance is at most 3, we should have $M_{f_y} \leq 2$ and so $y$ should have Type C or D by Table \ref{tab:1}. If $y$ has Type C or D, then $M_{f_y}=2$ and so the fact that the maximal distance is at most 3 implies that $\dist(y,\mathcal{H})=1$. 

We show that $y$ cannot be of Type D. Suppose on the contrary that $y$ is of Type D. From Table~\ref{tab:1} we see that there are five points with $f_y$-value $0$ giving rise to five points inside $\mH$ collinear with $y$, i.e. five lines through $y$ intersecting $\mathcal{H}$ in a point. By Table \ref{tab:2}, each of these lines has Type ADD. The five lines of $\mathcal{V}$ induced by these five lines of $\mH$ are mutually distinct since the five classical valuations contained in them are mutually distinct (as their centers are distinct). A contradiction follows from the fact that through a given $\mV$-point of Type D, there are only three distinct $\mV$-lines of Type ADD, see Table \ref{tab:2}. 
\end{proof}

\begin{cor} \label{co3.4}
$\mN$ has only points of Types A and C, and only lines of Types AAA, ACC and CCC. Moreover, every point of Type C is incident with a unique line of Type ACC.
\end{cor}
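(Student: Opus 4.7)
The plan is to deduce this corollary directly from Lemma \ref{lem3.3} together with the assumption that $\mathcal{H}$ is a \emph{full} subgeometry of $\mathcal{N}$; no new computations should be needed.

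First I would handle the point-type claim. Part (1) of Lemma \ref{lem3.3} says that every point of $\mathcal{N}$ is at distance at most $1$ from $\mathcal{H}$. Points of $\mathcal{H}$ are of Type $A$ by part (2), and points at distance exactly $1$ from $\mathcal{H}$ are of Type $C$ by part (3). Hence the only types that occur in $\mathcal{N}$ are $A$ and $C$, so of the eleven line types in Table~\ref{tab:2} the only \emph{a priori} candidates for lines of $\mathcal{N}$ are $AAA$, $AAC$, $ACC$, and $CCC$.

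Next I would rule out lines of type $AAC$. Suppose $L=\{x_1,x_2,x_3\}$ is a line of $\mathcal{N}$ containing two points $x_1,x_2\in \mathcal{H}$. Because $\mathcal{H}$ is a partial linear space and $x_1, x_2$ lie on the line of $\mathcal{H}$ joining them, and because $\mathcal{H}$ is a \emph{full} subgeometry of $\mathcal{N}$, this line of $\mathcal{H}$ coincides with $L$; in particular $x_3\in \mathcal{H}$. Therefore any line of $\mathcal{N}$ that contains at least two Type-$A$ points is of type $AAA$, which rules out $AAC$ and leaves exactly $AAA$, $ACC$, and $CCC$ as the possible line types.

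Finally I would establish the uniqueness statement. Let $y$ be a point of Type $C$. By Lemma \ref{lem3.3}(3) there is a unique point $y'\in \mathcal{H}$ collinear with $y$; equivalently, the line $yy'$ is the only line of $\mathcal{N}$ through $y$ that meets $\mathcal{H}$. Since every line of type $ACC$ through $y$ must meet $\mathcal{H}$, at most one such line exists. Conversely, on the line $yy'$ the point $y$ is not in $\mathcal{H}$, hence the line is not of type $AAA$, so by the classification above it must be $ACC$ (the third point on $yy'$ cannot be in $\mathcal{H}$, for otherwise fullness of $\mathcal{H}$ together with $y'\in \mathcal{H}$ would force $y\in \mathcal{H}$ as well). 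Hence $y$ lies on a unique line of type $ACC$.

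The whole argument is essentially bookkeeping; the only substantive ingredient beyond Lemma \ref{lem3.3} is the fullness hypothesis, which is what eliminates the type $AAC$ and forces the third point of an $ACC$-line to be of Type $C$. There is no real obstacle here.
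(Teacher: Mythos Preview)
Your argument is correct and matches the intended reasoning: the corollary is meant to follow immediately from Lemma~\ref{lem3.3} together with the fullness of $\mH$, and you have spelled out exactly those details. One minor wording issue: you refer to $AAC$ as being among ``the eleven line types in Table~\ref{tab:2}'', but $AAC$ does not appear in that table; your argument is unaffected, since you rule out $AAC$ independently via fullness (as you must, given that Lemma~\ref{lem3.5} is not yet available at this stage).
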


\begin{lem} \label{lem3.5}
If $L = \{ x_1,x_2,x_3 \}$ is a line of $\mN$, then $f_{x_1}$, $f_{x_2}$ and $f_{x_3}$ are mutually distinct and hence $\{ f_{x_1},f_{x_2},f_{x_3} \}$ is a $\mathcal{V}$-line.
\end{lem}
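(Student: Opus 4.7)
The plan is to show that the three induced valuations cannot all coincide; by Lemma~\ref{lem2.2}(3) this is exactly what is required, since two coinciding forces all three to coincide. By Corollary~\ref{co3.4} the possible line types are AAA, ACC, and CCC, and I would handle these three cases in turn.

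For a line of type AAA, Lemma~\ref{lem3.3}(1)--(3) forces every type-$A$ point to lie in $\mathcal{H}$ (points outside $\mathcal{H}$ are of type $C$), and on $\mathcal{H}$ itself each $f_{x_i}$ is the classical valuation centered at $x_i$. These are pairwise distinct because the centers $x_1,x_2,x_3$ are. For a line of type ACC, say with $x_1$ of type $A$ and $x_2,x_3$ of type $C$, the value distributions in Table~\ref{tab:1} show that a type-$A$ valuation and a type-$C$ valuation are not equal, so $f_{x_1}\neq f_{x_2}$ and the three valuations are not all equal; Lemma~\ref{lem2.2}(3) then gives $f_{x_2}\neq f_{x_3}$ as well.

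The substantive case is type CCC, and this is where I would focus. By Lemma~\ref{lem3.3}(3), each $x_i$ has a unique neighbor $x_i'\in\mathcal{H}$ and $\mathcal{O}_{f_{x_i}}=\{x_i'\}$, so $f_{x_i}=f_{x_j}$ would force $x_i'=x_j'$. To rule this out I would invoke axiom (NP2) directly: if some $x_i'=x_j'$ (with $i\neq j$) were collinear with both distinct points $x_i$ and $x_j$ of the line $L$, then both $x_i$ and $x_j$ would realize the minimum distance from $x_i'$ to $L$, contradicting the uniqueness of $\pi_L(x_i')$. Hence $x_1',x_2',x_3'$ are pairwise distinct, and with them so are the valuations $f_{x_1},f_{x_2},f_{x_3}$.

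The main obstacle is the CCC case, but even there the argument is short once Lemma~\ref{lem3.3}(3) supplies the ``unique inner neighbor'' $x_i'$: the uniqueness clause in (NP2) is precisely the geometric fact that prevents two collinear type-$C$ points of $\mathcal{N}$ from sharing the same distinguished neighbor in $\mathcal{H}$. No deeper structural information about the valuation geometry $\mathcal{V}$ is needed at this step.
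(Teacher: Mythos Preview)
Your proof is correct and follows essentially the same approach as the paper: both arguments reduce (via Lemma~\ref{lem2.2}(3)) to ruling out $f_{x_1}=f_{x_2}=f_{x_3}$, then observe that equal type-$A$ valuations would force equal centers, while equal type-$C$ valuations would force a single point of $\mathcal{H}$ to be collinear with more than one point of $L$, contradicting (NP2). Your version is slightly more explicit in separating out the ACC case and in phrasing the CCC contradiction via $\pi_L(x_i')$, but the substance is the same.
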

\begin{proof}
Suppose to the contrary that $f_{x_1}=f_{x_2}=f_{x_3}$. If these valuations are of Type A, then their centers $x_1,x_2,x_3$ would coincide, an obvious contradiction. If these valuations have Type C, then the unique point $y$ for which $f_{x_1}(y) = f_{x_2}(y) = f_{x_3}(y)=0$ would lie at distance 1 from each of $x_1$, $x_2$ and $x_3$, violating Property (NP2).
\end{proof}

\begin{lem} \label{lem3.6}
Suppose there are no quads meeting $\mH$. Then:
\begin{itemize}
\item[$(1)$] Every point of $\Gamma_1(\mathcal{H})$ is incident with precisely nine lines.
\item[$(2)$] If $x$ is a point of $\Gamma_1(\mathcal{H})$ and $L_1,L_2,\ldots,L_8$ denote the eight lines of Type CCC through $x$, then the eight $\mathcal{V}$-lines $\psi(L_1),\psi(L_2),\ldots,\psi(L_8)$ are precisely the eight $\mathcal{V}$-lines of Type CCC through $\psi(x)=f_x$ (see Table \ref{tab:2}).
\item[$(3)$] Every valuation of Type C is induced equally many times by a point of $\Gamma_1(\mathcal{H})$.  
\end{itemize}
\end{lem}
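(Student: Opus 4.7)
The plan is to split Part (1) into upper and lower bounds for the number of CCC lines through a point $x \in \G_1(\mH)$, deduce Part (2) as an immediate corollary, and handle Part (3) via a fiber-propagation argument through $\mV'$. Throughout let $x'$ denote the unique $\mH$-neighbor of $x$ (Lemma \ref{lem3.3}(3)); by Corollary \ref{co3.4}, every line through $x$ is of Type ACC or CCC, and $xx'$ is the unique ACC line through $x$, so the task reduces to counting CCC lines through $x$.

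For the upper bound I would show that two distinct CCC lines $L_1, L_2$ through $x$ must induce distinct $\mV$-lines. Otherwise, picking $y_i \in L_i \setminus \{x\}$ with $f_{y_1} = f_{y_2}$, the points $y_1, y_2$ would share the common value-$0$ point $u \in \mH$ as their $\mH$-neighbor, making $y_1 \neq y_2$ two points at distance $2$ with common neighbors $x$ and $u$, hence contained in a quad meeting $\mH$ at $u$---contradicting the hypothesis. Since Table \ref{tab:2} gives exactly $8$ Type CCC $\mV$-lines through $f_x$, at most $8$ CCC lines pass through $x$.

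The main obstacle is the lower bound, where I plan to produce $16$ distinct $\G_1(\mH)$-neighbors of $x$ lying on CCC lines rather than on $xx'$. The key step: for every $p \in \mH$ at $\mH$-distance $2$ from $x'$, the points $x$ and $p$ share no common neighbor in $\mN$. Indeed any common neighbor $v$ would be in $\mH$ (then $v = x'$, forcing $\dist_{\mH}(x',p)=1$) or in $\G_1(\mH)$ with $\mH$-neighbor $p$; in the latter case $xv$ is of Type CCC with third point in $\G_1(\mH)$ having some $\mH$-neighbor $q$, and Lemma \ref{lem3.1}(b) applied to the collinear $\mV$-points in $\psi(xv)$ would force $\dist_{\mH}(x',p) = 3$---a contradiction. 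Hence all $24$ points at $\mH$-distance $2$ from $x'$ lie at $\mN$-distance $3$ from $x$; combined with the $6$ points at $\mH$-distance $1$ from $x'$ sitting automatically at $\mN$-distance $2$ from $x$ (via $x$-$x'$-$p$), the value distribution $[1,22,40,0]$ of $f_x$ from Table \ref{tab:1} forces exactly $16$ of the $32$ points at $\mH$-distance $3$ from $x'$ to lie at $\mN$-distance $2$ from $x$. For each such $p$ the no-quads hypothesis yields a \emph{unique} common neighbor $v_p \in \G_1(\mH)$ with $\mH$-neighbor $p \neq x'$; distinct $p$'s give distinct $v_p$'s, and none can lie on $xx'$ (whose non-$\{x,x'\}$ point has $\mH$-neighbor $x'$), so the $v_p$'s populate at least $8$ CCC lines through $x$. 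Combining the two bounds gives Part (1), and Part (2) follows since the $8$ CCC lines through $x$ must induce the $8$ available CCC $\mV$-lines through $f_x$.

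For Part (3), I would show that $|\psi^{-1}(g)|$ is constant across Type C valuations by constructing, for each pair of Type C valuations $g_1, g_2$ collinear in $\mV'$, a bijection $\psi^{-1}(g_1) \to \psi^{-1}(g_2)$: any $y \in \psi^{-1}(g_1)$ lies on a unique CCC line $L_y$ of $\mN$ with $\psi(L_y) = \{g_1, g_2, g_3\}$ by Part (2), and sending $y$ to the point of $L_y$ inducing $g_2$ gives the map, with the symmetric construction providing the inverse. Connectedness of $\mV'$ (Lemma \ref{lem3.1}(a)) then propagates this equality across all Type C valuations.
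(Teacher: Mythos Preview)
Your proof is correct, and Parts (2) and (3) match the paper's arguments essentially verbatim. For Part (1) the paper takes a more direct route: it sets $Y$ to be the neighbors of $x$ off the line $xx'$ and $Y' = \{\,p \in \Gamma_2(x)\cap\mH : p \not\sim_{\mH} x'\,\}$, reads off $|Y'| = 22 - 6 = 16$ straight from Table~\ref{tab:1}, and shows that $y \mapsto y'$ (the unique $\mH$-neighbor) is a bijection $Y \to Y'$ using only the no-quads hypothesis. Your split into an upper and a lower bound is the same bijection unpacked---your upper bound is its injectivity, your lower bound its surjectivity---but in the lower bound you take an extra detour through Lemma~\ref{lem3.1}(b) to prove that every point of $Y'$ sits at $\mH$-distance $3$ from $x'$. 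That fact is true but unnecessary: all one needs is that $p \in Y'$ is not $\mH$-collinear with $x'$, which already forces the unique common neighbor of $x$ and $p$ to lie outside $\mH$ (if it were in $\mH$ it would equal $x'$, contradicting $p \not\sim x'$). So your argument is sound but slightly longer; the small bonus is that you establish the precise $\mH$-distance profile of $Y'$, which the paper never states.
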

\begin{proof}
Let $x$ be a point not contained in $\mH$ and let $x'$ be the unique point of $\mH$ collinear with $x$. Let $Y$ denote the set of neighbors of $x$ not on the line $xx'$. Every point $y \in Y$ is collinear with a unique point $y' \in \mathcal{H}$. The point $y'$ lies at distance 2 from $x$ and cannot be collinear with $x'$, otherwise $x'$ and $y$ would be two distinct common neighbors of $x$ and $y'$, implying that $x$ and $y'$ would be contained in some quad meeting $\mathcal{H}$. So, $y' \in Y'$, where $Y'$ denotes the set of all points of $\Gamma_2(x) \cap \mathcal{H}$ noncollinear with $x'$. The points in $\mathcal{H}$ at distance 2 from $x$ are those with $f_x$-value 1 and there are 22 such points by Table \ref{tab:1}. Since six of these are collinear with $x'$, we have $|Y'|=16$. The map $\phi: Y \to Y'; y \mapsto y'$ must be a bijection.
Indeed, if $y' \in Y'$, then $\dist(x,y')=2$ and since there are no quads containing $x$ and $y'$, the points $x$ and $y'$ have a unique common neighbor $y$. This point $y$ cannot be contained in $\mathcal{H}$, otherwise it would coincide with $x'$ (as $y \sim x$), in contradiction with the fact that no point of $Y'$ is collinear with $x'$. Since $y$ is not contained in $\mathcal{H}$, we have $\phi(y)=y'$. Moreover, any point $z \in Y$ satisfying $\phi(z)=y'$ must be a common neighbor of $x$ and $y'$ and hence coincide with $y$. So, $\phi$ is a bijection, implying that the total number $\frac{|Y|}{2}$ of lines of Type CCC through $x$ is equal to $\frac{|Y'|}{2}=8$. This proves the first claim.

\medskip The 16 neighbors $y$ of $x$ not contained in $xx'$ all induce distinct valuations of Type C since they give rise to 16 distinct singletons $\mathcal{O}_{f_y}$ corresponding to the 16 points of $Y'$. So, the 8 lines of Type CCC of $\mathcal{V}$ through $f_x$ corresponding to the 8 lines of Type CCC of $\mN$ through $x$ are all distinct. By Table \ref{tab:2}, these are all the lines of $\mathcal{V}$ through the point $f_x$. This proves the second claim.

\medskip In view of Lemma \ref{lem3.1}(a), it suffices to prove that if $f_1$ and $f_2$ are two valuations of Type C contained in some line $\{ f_1,f_2,f_3 \}$ of $\mathcal{V}'$, then $N_1=N_2$, where $N_i$ with $i \in \{ 1,2 \}$ is the number of points of Type C of $\mathcal{N}$ inducing $f_i$. That this is indeed the case follows from Claim (2), which implies that the total number of lines of Type CCC of $\mathcal{N}$ inducing $\{ f_1,f_2,f_3 \}$ is equal to both $N_1$ and $N_2$.
\end{proof}

\bigskip \noindent The following is a consequence of Lemma \ref{lem3.6}.

\begin{cor}
There are no semi-finite generalized hexagons of order $(2,t)$ containing $H^D(2)$ as a full subgeometry.
\end{cor}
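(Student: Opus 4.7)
The plan is to observe that this corollary is an almost immediate harvest of Lemma~\ref{lem3.6}(1). All I really need to do is verify that the hypothesis ``no quads meeting $\mathcal{H}$'' of Lemma~\ref{lem3.6} is automatic in the generalized hexagon setting, and then count lines through a single point outside $\mathcal{H}$.

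First I would check that any generalized hexagon $\mathcal{N}$ contains no quads at all. A quad requires, by (Q4), a nondegenerate generalized quadrangle structure on two points at mutual distance $2$; in such a GQ, any two noncollinear points have more than one common neighbor. But axiom (GH2) of a generalized hexagon says that any pair of points at distance $2$ has a \emph{unique} common neighbor, which contradicts this. Hence in particular no quad of $\mathcal{N}$ meets $\mathcal{H}$, so the assumption of Lemma~\ref{lem3.6} is fulfilled. I would also invoke the remark after Main Result~1: since $\mathcal{H} \cong H^D(2)$ is a full subgeometry of the generalized hexagon $\mathcal{N}$ and both are generalized hexagons, $\mathcal{H}$ is automatically isometrically embedded in $\mathcal{N}$. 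This makes the full machinery of Section~\ref{sec3.2} (Lemmas~\ref{lem3.3} and~\ref{lem3.6}) available.

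Next I would split into two cases. If $\mathcal{N} = \mathcal{H}$, then $\mathcal{N} \cong H^D(2)$ has order $(2,2)$ and is finite. Otherwise, pick any point $x \in \mathcal{N} \setminus \mathcal{H}$; by Lemma~\ref{lem3.3}(1) we have $x \in \Gamma_1(\mathcal{H})$, and Lemma~\ref{lem3.6}(1) tells us that $x$ lies on exactly nine lines. Since $\mathcal{N}$ has uniform order $(2,t)$, every point is incident with $t+1$ lines, so $t+1 = 9$ and $t = 8$. In either case $t$ is finite, so no semi-finite generalized hexagon of order $(2,t)$ can contain $H^D(2)$ as a full subgeometry.

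There is no genuine obstacle here: the substantive work (the bijection $\phi : Y \to Y'$ and the subsequent line count) was carried out in Lemma~\ref{lem3.6}, and the role of the corollary is merely to record that the no-quads hypothesis of that lemma is free of charge in the generalized hexagon setting and that a single point outside $\mathcal{H}$ already pins down $t$.
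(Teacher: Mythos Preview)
Your proposal is correct and is precisely the argument the paper has in mind: the corollary is stated immediately after Lemma~\ref{lem3.6} with nothing more than ``The following is a consequence of Lemma~\ref{lem3.6}'', and your write-up simply spells out that consequence, namely that (GH2) rules out quads so Lemma~\ref{lem3.6}(1) applies and forces $t+1=9$ at any point outside $\mathcal{H}$. Your explicit check that a full subhexagon of a generalized hexagon is automatically isometrically embedded (so that the standing hypotheses of Section~\ref{sec3.2} are met) is a nice touch the paper leaves implicit.
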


\bigskip \noindent \textbf{Remark.} Suppose $\mathcal{H}$ is embedded as a subhexagon in the dual twisted triality hexagon $T(2,8)$. Then every point $x$ of $T(2,8)$ induces a valuation $f_x'$ of $\mathcal{H}$. As $T(2,8)$ contains 819 points, each valuation of Type C of $\mathcal{H}$ must be induced by precisely three points of $T(2,8)$. Then also every $\mathcal{V}$-line of Type CCC must be induced by precisely three lines of $T(2,8)$. These facts in combination with the fact that $T(2,8)$ has no subgeometries that are ordinary $k$-gons with $k \in \{ 3,4,5 \}$ can be used to prove  several properties of the valuation geometry $\mathcal{V}$. For instance, the property mentioned in Lemma \ref{lem3.1}(b) can be proved in this way. Indeed, take a line $\{ x_1,x_2,x_3  \}$ of Type CCC in $T(2,8)$ such that $f_{x_1}'=f_1$ and $f_{x_2}'=f_2$, and let $y_1$ and $y_2$ be the unique points contained in $\mathcal{O}_{f_1}$ and $\mathcal{O}_{f_2}$. Then the fact that every cycle containing the consecutive vertices $y_1,x_1,x_2,y_2$ has length at least 6 implies that $\dist(y_1,y_2)=3$. The nonexistence of subgeometries that are ordinary $k$-gons with $k \in \{ 3,4,5 \}$ can also be used to prove that the geometry $\mathcal{V}'$ does not have triangles as subgeometries. Another application of the technique will be given in the proof of the following lemma.

\begin{lem} \label{lem3.8}
There are no quads in $\mN$ containing a point of $\mH$. 
\end{lem}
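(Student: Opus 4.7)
Assume for contradiction that $Q$ is a quad of $\mN$ of order $(2,t')$, with $t' \in \{1,2,4\}$ by Lemma \ref{lem2.1}, containing a point $x \in \mH$. The plan is a case analysis by the number $k$ of Type-AAA lines through $x$ in $Q$. First I would show that no two $\mH$-points of $Q$ can lie at distance $2$ in $\mN$: if $y \in Q \cap \mH$ with $\dist(x,y)=2$, then (GH2) gives $x,y$ a unique common neighbor in $\mH$, while axiom (Q3) forces $t'+1 \geq 2$ common neighbors in $Q$. Any extra common neighbor $c \in Q$ is of Type $C$ by Corollary \ref{co3.4}; since $\mH$ is full, both $\mN$-lines $xc$ and $yc$ are of Type ACC, contradicting the uniqueness of ACC lines through a Type-$C$ point (Corollary \ref{co3.4}). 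Applied to $\mH$-points on distinct AAA lines through $x$ in $Q$, this forces $k \in \{0,1\}$.

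When $k=1$, let $L_1 = Q \cap \mH$. For any Type-$C$ point $c \in Q$, the projection property of the GQ $Q$ yields a unique line of $Q$ through $c$ meeting $L_1$; that line is of Type ACC, so by Corollary \ref{co3.4} it is the unique ACC line through $c$ in $\mN$, placing $c$'s $\mH$-neighbor on $L_1$. The $2t'^2 \geq 2$ lines of $Q$ avoiding $L_1$ are of Type CCC and each induces a $\mV'$-line by Lemma \ref{lem3.5}, but then Lemma \ref{lem3.1}$(b)$ would require their three $\mathcal{O}$-centers---all in $L_1$---to be pairwise at distance $3$ in $\mH$, contradicting that points of $L_1$ have pairwise distance at most $1$.

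When $k=0$, every line through $x$ in $Q$ is of Type ACC and induces one of only two ACC $\mV$-lines through $f_x$ (Table \ref{tab:2}). Whenever two $\mN$-lines $L_1, L_2$ through $x$ in $Q$ induce the same ACC $\mV$-line---which is automatic by pigeonhole when $t' \geq 2$---I would choose Type-$C$ points $a \in L_1$, $b \in L_2$ with $f_a = f_b$ and a common Type-$C$ neighbor $m$ of $a,b$ in $Q$. The induced CCC $\mV$-lines from $\{a,m,a'\}$ and $\{b,m,b'\}$ share the two $\mV$-points $f_a = f_b$ and $f_m$, so by the partial linear space property of $\mV$ they coincide; hence $f_{a'} = f_{b'}$ with $a' \neq b'$ in $\mN$. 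A short argument using Lemma \ref{lem3.1}$(b)$ on the CCC line $am$ shows $a'$ cannot be at distance $1$ from $x$ (else $\mathcal{O}_{f_{a'}} = \{x\} = \mathcal{O}_{f_a}$ at distance $0$), so $a', b'$ are at distance $2$ from $x$ in $Q$; their common $\mH$-neighbor $p$ is then at distance $3$ from $x$ in $\mH$, yet axiom (Q3) forces $p \in Q \cap \mH = \{x\}$, a contradiction.

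The main obstacle is the remaining $(3 \times 3)$-grid sub-case where the two lines through $x$ in $Q$ induce the two distinct ACC $\mV$-lines through $f_x$. There the four diagonal Type-$C$ points yield four distinct Type-$C$ valuations with $\mathcal{O}$-centers $p_{ij}$ at distance $3$ from $x$, and the four CCC $\mV'$-lines induced by $Q$ arrange these together with $x$ into a tight distance-$3$ configuration of five points in $\mH$. I would finish this case by combining Lemma \ref{lem3.1}$(b)$ and $(c)$ with the triangle-freeness of $\mV'$ recorded in the remark preceding this lemma, to rule out such a configuration inside $H^D(2)$.
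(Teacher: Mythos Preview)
Your overall strategy differs from the paper's: the paper first reduces any quad meeting $\mH$ to a $(3\times 3)$-subgrid $G$ and then does a case analysis on the subspace $G\cap\mH$; you instead fix $x\in Q\cap\mH$ and split by the number $k$ of Type-$AAA$ lines of $Q$ through $x$. Your treatment of $k\geq 2$, $k=1$, and $k=0$ with two $Q$-lines through $x$ inducing the same ACC $\mathcal V$-line is correct and pleasantly avoids the $T(2,8)$ embedding the paper uses.

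The genuine gap is precisely where you locate it: the grid case $k=0$, $t'=1$, with the two lines through $x$ inducing the two \emph{distinct} ACC $\mathcal V$-lines through $f_x$. Your proposed resolution does not go through. Lemma~\ref{lem3.1}(c) requires a full $(3\times 3)$-subgrid of $\mathcal V'$, but the four CCC lines of $Q$ map to only four $\mathcal V'$-lines; the two ``missing'' $\mathcal V$-lines needed to complete a grid would have to pass through the Type-$A$ point $f_x$, hence are ACC and do not lie in $\mathcal V'$ at all. Triangle-freeness of $\mathcal V'$ is likewise not visibly applicable, and the distance-$3$ constraints you obtain from Lemma~\ref{lem3.1}(b) leave the two ``diagonal'' pairs $(p_{11},p_{22})$ and $(p_{12},p_{21})$ unconstrained, so you have not exhibited an impossible configuration in $H^D(2)$. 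The paper closes this case by a genuinely different idea: it views $\mH$ as a subhexagon of $T(2,8)$, picks $u\in T(2,8)$ with $f'_u=f_{x_{11}}$, and (using the CCC $\mathcal V$-lines through $f_{x_{11}}$) finds neighbours $v_1,v_2$ of $u$ in $T(2,8)$ with $f'_{v_1}=f_{x_{31}}$ and $f'_{v_2}=f_{x_{13}}$; since both $v_i$ are also adjacent to $x$, one obtains a quadrangle $u,v_1,x,v_2$ inside the generalized hexagon $T(2,8)$, a contradiction. This forces $f_{x_{31}}=f_{x_{13}}$, and a repetition yields $f_{x_{31}}=f_{x_{32}}$, contradicting Lemma~\ref{lem3.5}. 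To finish your route you would need either this $T(2,8)$ argument or an additional computer-verified fact about $\mathcal V'$ beyond Lemma~\ref{lem3.1}.
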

\begin{proof}
Suppose that there exists a quad meeting $\mN$, then there also exists a $(3 \times 3)$-subgrid $G$ meeting $\mH$. Since $G$ and $\mH$ are subspaces of $\mN$, the intersection $G \cap \mathcal{H}$ must be a subspace contained in $G$. We can therefore distinguish the following possibilities:

\medskip (1) The grid $G$ is contained in the generalized hexagon $\mathcal{H}$. This is obviously impossible.

\medskip (2) The grid $G$ intersects $\mathcal{H}$ in the union $L_1 \cup L_2$ of two intersecting lines $L_1$ and $L_2$. But this is also impossible. If this were the case, then any point of $G \setminus (L_1 \cup L_2)$ would be collinear with at least two points of $\mathcal{H}$, namely one of $L_1$ and another one on $L_2$. This would be in contradiction with Lemma \ref{lem3.3}(3).

\medskip (3) The grid $G$ intersects $\mathcal{H}$ in a set of 2 or 3 mutually noncollinear points. Then take a point in $G$ that is collinear with two points of $G \cap \mathcal{H}$. As in (2) this point would be collinear with at least two points of $\mathcal{H}$, again a contradiction.  

\medskip (4) The grid $G$ intersects $\mathcal{H}$ in a line $L$. Let $L'$ be a line of $G$ disjoint from $L$, let $x_1$ and $x_2$ be two distinct points of $L$, and let $y_i$ with $i \in \{ 1,2 \}$ be the unique point of $L$ collinear with $x_i$. The facts that $L$ is a line of Type CCC and $y_1 \sim y_2$ would be in contradiction with Lemma \ref{lem3.1}(b).

\medskip (5)  The grid $G$ intersects $\mathcal{H}$ in a unique point $x$. We label the points of $G$ by $x_{ij}$, $i,j \in \{ 1,2,3 \}$, such that $x=x_{33}$ and $x_{ij} \sim x_{i'j'}$ if and only if either $i=i'$ or $j=j'$ (see Figure \ref{fig}). Regard $\mH$ as a subhexagon of the dual twisted triality hexagon $T(2,8)$.

Suppose $f_{x_{31}} \not= f_{x_{13}}$. Let $u$ be a point of $T(2,8)$ for which $f_u' = f_{x_{11}}$. Then there exist unique points $v_1,v_2$ in $T(2,8)$ collinear with $u$ such that $uv_1 \not= uv_2$, $f_{v_1}' = f_{x_{31}}$ and $f_{v_2}' = f_{x_{13}}$. Then $u,v_1,x,v_2,u$ define a subquadrangle of $T(2,8)$, which is impossible. Hence, $f_{x_{31}} = f_{x_{13}}$.

If we repeat the above argument with $x_{11}$ replaced by $x_{12}$, then we find that $f_{x_{13}} = f_{x_{32}}$. So, $f_{x_{31}} = f_{x_{32}}$. But this is in contradiction with the fact that $\{ f_{x_{31}},f_{x_{32}},f_{x_{33}} \}$ is a line of $\mathcal{V}$, see Lemma \ref{lem3.5}.
\end{proof}

\begin{figure}
\begin{center}
\begin{tikzpicture}[line cap=round,line join=round,>=triangle 45,x=1.0cm,y=1.0cm]
\draw (2.8226839826839827,2.7532467532467533)-- (1.4026839826839828,1.393246753246754);
\draw (1.4026839826839828,1.393246753246754)-- (2.762683982683982,-0.026753246753246085);
\draw (2.762683982683982,-0.026753246753246085)-- (4.182683982683982,1.3332467532467531);
\draw (4.182683982683982,1.3332467532467531)-- (2.8226839826839827,2.7532467532467533);
\draw (2.1126839826839827,2.0732467532467536)-- (3.4683497871330253,0.6490956927190765);
\draw (3.5026839826839824,2.0432467532467533)-- (2.086723880677817,0.6790286244590737);
\draw [rotate around={-0.26525634337598575:(2.8453178777145745,-0.5793713674254614)}] (2.8453178777145745,-0.5793713674254614) ellipse (2.169575869868613cm and 1.0998176043554964cm);
\begin{scriptsize}
\draw [fill=black] (2.8226839826839827,2.7532467532467533) circle (1.5pt);
\draw[color=black] (2.959757021310607,2.966729374285681) node {$x_{11}$};
\draw [fill=black] (1.4026839826839828,1.393246753246754) circle (1.5pt);
\draw[color=black] (1.5410071396897176,1.612468123647558) node {$x_{31}$};
\draw[color=black] (1.6,2.3) node {$G$};
\draw [fill=black] (2.762683982683982,-0.026753246753246085) circle (1.5pt);
\draw[color=black](2.8,-0.03) node[anchor=north west] {$x = x_{33}$};
\draw [fill=black] (4.182683982683982,1.3332467532467531) circle (1.5pt);
\draw[color=black] (4.3269159981452825,1.5479794926647903) node {$x_{13}$};
\draw [fill=black] (2.1126839826839827,2.0732467532467536) circle (1.5pt);
\draw[color=black] (2.250382080500162,2.2960476120648963) node {$x_{21}$};
\draw [fill=black] (3.4683497871330253,0.6490956927190765) circle (1.5pt);
\draw[color=black] (3.604643331138284,0.8644000042474519) node {$x_{23}$};
\draw [fill=black] (3.5026839826839824,2.0432467532467533) circle (1.5pt);
\draw[color=black] (3.6433365097279444,2.2573544334752356) node {$x_{12}$};
\draw [fill=black] (2.086723880677817,0.6790286244590737) circle (1.5pt);
\draw[color=black] (2.224586628107055,0.9030931828371125) node {$x_{32}$};
\draw [fill=black] (2.7947039316808997,1.3611376888529136) circle (1.5pt);
\draw[color=black] (2.9339615689175,1.5866726712544508) node {$x_{22}$};
\draw[color=black] (1.8,0.10) node {$\mathcal{H}$};
\end{scriptsize}
\end{tikzpicture}
\caption{Lemma~\ref{lem3.8} (5)} \label{fig}
\end{center}
\end{figure}

\begin{cor} \label{co3.9}
Every point of $\mN$ not contained in $\mH$ is incident with precisely nine lines.
\end{cor}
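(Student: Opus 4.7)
The plan is to observe that this corollary follows almost immediately by combining three earlier results, so no new technical work is required.

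First, I would invoke Lemma~\ref{lem3.3}(1), which tells us that every point $x$ of $\mathcal{N}$ has $\dist(x,\mathcal{H}) \leq 1$. Consequently, a point of $\mathcal{N}$ not in $\mathcal{H}$ must lie at distance exactly $1$ from $\mathcal{H}$, that is, $x \in \Gamma_1(\mathcal{H})$.

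Next, I would note that Lemma~\ref{lem3.8} established precisely that there are no quads of $\mathcal{N}$ containing a point of $\mathcal{H}$, which is the same as saying no quads of $\mathcal{N}$ meet $\mathcal{H}$. This is exactly the hypothesis under which Lemma~\ref{lem3.6} is stated.

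Therefore I can apply Lemma~\ref{lem3.6}(1) directly to $x$ and conclude that $x$ is incident with precisely nine lines. There is no genuine obstacle here since the preparatory lemmas do all the work; the corollary is essentially just a bookkeeping statement combining Lemmas \ref{lem3.3}(1), \ref{lem3.6}(1), and \ref{lem3.8}.
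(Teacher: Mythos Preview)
Your proposal is correct and matches the paper's own proof, which simply states that the corollary follows from Lemmas~\ref{lem3.6} and~\ref{lem3.8}. Your additional explicit appeal to Lemma~\ref{lem3.3}(1) to place any point outside $\mathcal{H}$ into $\Gamma_1(\mathcal{H})$ is a harmless clarification of what the paper leaves implicit.
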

\begin{proof}
This follows from Lemmas \ref{lem3.6} and \ref{lem3.8}.
\end{proof}

\begin{lem} \label{lem3.10}
Let $x$ be a point in $\mN$ not contained in $\mH$, $x'$ the unique point of $\mH$ collinear with $x$ and $y$ a point of $\mathcal{H}$ at distance $2$ from $x'$. Then $\dist(x,y)=3$ and every neighbor of $y$ has at most one common neighbor with $x$.  
\end{lem}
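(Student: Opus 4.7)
My plan is to handle the two assertions separately. The distance $\dist(x,y)=3$ will come from a value calculation for the induced valuation $f_x$, while the common-neighbour claim will be argued by contradiction using Shult--Yanushka, Lemma~\ref{lem3.8}, and the grid property in Lemma~\ref{lem3.1}(c).

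For the first claim, Lemma~\ref{lem3.3}(3) gives that $f_x$ is of Type C with $\mathcal{O}_{f_x}=\{x'\}$ and $M_{f_x}=2$, and $\dist(x,y)=f_x(y)+1$ for every $y\in\mH$ (because $\dist(x,\mH)=1$). Hence it suffices to prove $f_x(y)=2$. Take the unique common neighbour $z$ of $x'$ and $y$ inside the generalized hexagon $\mH$ (which exists since $\dist_{\mH}(x',y)=2$); then $z\sim x'$ gives $f_x(z)=1$. Suppose for a contradiction that $f_x(y)=1$ as well, and look at the line $\{y,z,y''\}$ of $\mH$ through $y$ and $z$. Since $y''\sim y$ but $x'\not\sim y$ we have $y''\ne x'$, so $f_x(y'')\ge 1$; but then both $y$ and $z$ attain the minimum value $1$ on this line, contradicting the uniqueness of the minimum in the definition of a valuation. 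Therefore $f_x(y)=2$ and $\dist(x,y)=3$.

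For the second claim, let $u$ be a neighbour of $y$. First I rule out $u\sim x$: if $u\in\mH$ then Lemma~\ref{lem3.3}(3) forces $u=x'$, contradicting $x'\not\sim y$, and if $u\notin\mH$ then the line $\{x,u,w\}$ is neither AAA (as $x\notin\mH$) nor ACC (the $A$-point on it would have to be both $x'$ and $y$), so by Corollary~\ref{co3.4} it is of Type CCC, and Lemma~\ref{lem3.1}(b) gives $\dist_{\mH}(x',y)=3$, a contradiction. Hence $\dist(x,u)\in\{2,3\}$, and only $\dist(x,u)=2$ requires work. Assume that $u$ and $x$ have at least two common neighbours. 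By Shult--Yanushka (applicable because every line has three points) they lie in a unique quad $Q$, a generalized quadrangle of order $(2,t)$ with $t\in\{1,2,4\}$. If $u\in\mH$ then $Q\cap\mH\ne\emptyset$, contradicting Lemma~\ref{lem3.8}; so assume $u\notin\mH$.

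By Lemma~\ref{lem3.8}, $Q$ contains no point of $\mH$, so every point of $Q$ is of Type C and every line of $Q$ is of Type CCC. Because quads are convex subspaces of a near polygon, the standard gate property makes the map sending each $q\in Q$ to its unique $\mH$-neighbour $q'\in\mH$ injective: two distinct $q_1,q_2\in Q$ with $q_1'=q_2'$ would give their common $\mH$-neighbour two closest points in $Q$. Now choose a $(3\times 3)$-subgrid $G$ of $Q$ having $u$ and $x$ as opposite corners: take $G=Q$ if $Q$ is itself a grid, and otherwise use the fact that every pair of noncollinear points of $W(2)$ or $Q(5,2)$ is contained in a $(3\times 3)$-subquadrangle. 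The nine $\mH$-neighbours of the points of $G$ are then pairwise distinct by injectivity, so the nine induced Type C valuations are pairwise distinct and, together with the six $\mV$-lines of Type CCC coming from the rows and columns of $G$, form a $(3\times 3)$-subgrid of $\mathcal{V}'$. Since $f_u$ and $f_x$ are noncollinear in this subgrid, Lemma~\ref{lem3.1}(c) forces $\dist_{\mH}(x',y)=3$, contradicting the hypothesis. Hence $u$ and $x$ have at most one common neighbour. The only nonroutine step I expect is the extraction of the $(3\times 3)$-subgrid $G$ through $u$ and $x$ in the non-grid cases, which reduces to a combinatorial property of the small generalized quadrangles $W(2)$ and $Q(5,2)$.
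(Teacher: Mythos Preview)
Your proof is correct and follows essentially the same route as the paper: establish $\dist(x,y)=3$ from the Type~C valuation $f_x$ (the paper argues via (NP2) on the line $uy$, you via the valuation values on that line---equivalent), then for the second claim pass to a quad, invoke Lemma~\ref{lem3.8} to keep it disjoint from $\mH$, extract a $(3\times 3)$-subgrid through the two points, push it to a subgrid of $\mathcal{V}'$, and finish with Lemma~\ref{lem3.1}(c). One small remark: your injectivity of $q\mapsto q'$ is more cleanly justified by the quad axioms directly (a common $\mH$-neighbour of two points of $Q$ is a common neighbour and hence lies in $Q$ by (Q2)/(Q3), contradicting $Q\cap\mH=\emptyset$) than by an appeal to a ``gate property'', which need not hold for arbitrary point--quad pairs; the paper uses exactly this convexity argument.
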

\begin{proof}
Let $u$ denote the unique common neighbor of $x'$ and $y$. Then $\dist(x,u)=2$. If $\dist(x,y) \not= 3$, then $\dist(x,y) \leq 2$ and (NP2) would imply that the line $uy$ contains a point at distance 1 from $x$, in contradiction with the fact that $\mathcal{O}_{f_x}$ is a singleton. So, $\dist(x,y)=3$.

Suppose $z$ is a neighbor of $y$ such that $x$ and $z$ have more than one common neighbor. Then $x$ and $z$ are contained in a unique quad and hence also in some $(3 \times 3)$-subgrid $G$. By Lemma \ref{lem3.8}, $Q \cap \mathcal{H} = \emptyset$ and hence also $G \cap \mathcal{H} = \emptyset$. If two points of $G$, say $x_1$ and $x_2$, would induce the same valuation $f$, and $\mathcal{O}_f = \{ y \}$, then $y \in \mathcal{H}$ would, as a common neighbor of $x_1$ and $x_2$, be contained in $Q$, in contradiction with $Q \cap \mathcal{H} = \emptyset$. So, the nine points and six lines of $G$ induce nine distinct points and six distinct lines of $\mathcal{V}'$, i.e. a $(3 \times 3)$-subgrid $G'$ of $\mathcal{V}'$. By Lemma \ref{lem3.1}(b) applied to the opposite points $f_{x}$ and $f_z$ of $G'$, we now see that $x'$ and $y$ should be at distance 3 from each other, a contradiction. 
\end{proof}

\begin{lem} \label{lem3.11}
Every point $x$ of $\mathcal{H}$ is incident with precisely nine lines. As a consequence, $\mathcal{N}$ has order $(2,8)$. 
\end{lem}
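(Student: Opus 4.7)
The plan is to count the lines through $x \in \mathcal{H}$ by type, reduce everything to a single parameter $k$ coming from Lemma~\ref{lem3.6}(3), and then show that $k = 3$.

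First, by Corollary~\ref{co3.4}, every line through $x$ is of Type AAA or of Type ACC. Since $\mathcal{H}$ has order $(2,2)$, there are exactly $3$ lines of Type AAA through $x$. By Lemma~\ref{lem3.3}(3), every line of Type ACC through $x$ has the form $\{x,y,z\}$ with $y,z \in \Gamma_1(\mathcal{H})$ and $y' = z' = x$, so that $f_y$ and $f_z$ are Type-C valuations with $\mathcal{O} = \{x\}$. By Lemma~\ref{lem3.6}(3) there is a constant $k$, necessarily $\geq 1$ since $\mathcal{N}$ properly contains $\mathcal{H}$, such that every Type-C valuation of $\mathcal{H}$ is induced by exactly $k$ points of $\Gamma_1(\mathcal{H})$. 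The automorphism group of $H^D(2)$ acts transitively on its $63$ points, so the number of Type-C valuations with $x$ as $0$-point is $252/63 = 4$. Hence $x$ has $4k$ external neighbors, organized into $2k$ lines of Type ACC, and the total number of lines through $x$ is $3 + 2k$.

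The main obstacle is showing $k=3$. The approach I would take is to apply Lemma~\ref{lem3.10} to a fixed external point $x \in \Gamma_1(\mathcal{H})$ with $\mathcal{H}$-neighbor $x'$: for every $y \in \Gamma_2(x') \cap \mathcal{H}$, each neighbor of $y$ in $\mathcal{N}$ has at most one common neighbor with $x$. The intersection-number structure of $H^D(2)$ forces such a $y$ to have $1$ neighbor in $\Gamma_1(x')$, $1$ in $\Gamma_2(x')$ and $4$ in $\Gamma_3(x')$ inside $\mathcal{H}$, together with $4k$ external neighbors. Moreover, by Lemma~\ref{lem3.10} applied to $x$ and the $z$'s in $\Gamma_3(x')$, the $16$ external CCC-neighbors of $x$ hit $16$ distinct points of $\Gamma_3(x')$. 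Double-counting pairs $(y,z)$ with $y \in \Gamma_2(x') \cap \mathcal{H}$ and $z \in \Gamma_1(y)$, and matching the total number of common-neighbor triples against both the Lemma~\ref{lem3.10} bound and the count $|\Gamma_1(x)| = 18$ from Corollary~\ref{co3.9}, yields a linear identity in $k$ whose only positive integer solution compatible with the earlier constraints is $k=3$.

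Once $k = 3$ is established, every point of $\mathcal{H}$ is incident with $3 + 2 \cdot 3 = 9$ lines. Combined with Corollary~\ref{co3.9}, every point of $\mathcal{N}$ is incident with $9$ lines, so $\mathcal{N}$ has order $(2, 8)$, as claimed.
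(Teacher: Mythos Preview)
Your first paragraph is fine: every point of $\mathcal{H}$ lies on $3+2k$ lines, with $k$ the constant from Lemma~\ref{lem3.6}(3), and the whole content of the lemma is that $k=3$. The gap is in your second paragraph. You announce a double count of pairs $(y,z)$ and ``common-neighbor triples'' and assert that this ``yields a linear identity in $k$'', but you never write the identity down or say what is being counted on each side. The data you assemble (the $1{+}1{+}4$ split of the $\mathcal{H}$-neighbors of $y$, the $4k$ external neighbors of $y$, the $16$ points of $\Gamma_3(x')$ hit by the CCC-neighbors of $x$, and $|\Gamma_1(x)|=18$) do not by themselves combine into an equation determining $k$; summing over all $24$ points $y\in\Gamma_2(x')\cap\mathcal{H}$ only multiplies the unknown on both sides. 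As written this is a hope, not a proof.

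The paper obtains the result by a single clean path count, and it is instructive to see how little is needed. Fix $x\in\mathcal{H}$, take $y\in\mathcal{H}$ with $\dist(x,y)=2$, and pick any external $z$ with $z'=y$ (possible since $k\ge 1$). Apply Lemma~\ref{lem3.10} with external point $z$, $\mathcal{H}$-neighbor $y$, and distance-$2$ point $x$: then $\dist(x,z)=3$ and every neighbor of $x$ has at most one common neighbor with $z$. Now count paths $z,u,v,x$ of length $3$. From $z$: there are $9$ lines through $z$ (Corollary~\ref{co3.9}), each giving a unique $u\in\Gamma_2(x)$, and since no quad contains $x$ (Lemma~\ref{lem3.8}) each $u$ has a unique common neighbor $v$ with $x$; so exactly $9$ paths. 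From $x$: each of the $t_x+1$ lines through $x$ gives a unique $v\in\Gamma_2(z)$, and by Lemma~\ref{lem3.10} each such $v$ has exactly one common neighbor with $z$; so $t_x+1$ paths. Hence $t_x+1=9$. In your notation this is $3+2k=9$, i.e.\ $k=3$. Note the direction in which Lemma~\ref{lem3.10} is used: the external point is $z$ and the $\mathcal{H}$-point whose neighbors are controlled is $x$, the reverse of the orientation you set up.
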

\begin{proof}
Let $y$ be a point of $\mathcal{H}$ at distance 2 from $x$, and let $f$ be a valuation of Type C for which $\mathcal{O}_f=\{ y \}$. By Lemma \ref{lem3.6}(3), there exists a point $z$ in $\mathcal{N}$ not contained in $\mathcal{H}$ such that $f_z=f$. Then $z$ is collinear with $y$. By Lemma \ref{lem3.10}, $\dist(x,z)=3$. Since there are no quads through the point $x \in \mathcal{H}$, each point of $\Gamma_1(z) \cap \Gamma_2(x)$ has a unique neighbor with $x$. We count the total number of paths $z,u,v,x$ of length 3 connecting the points $z$ and $x$. Since there are nine lines through $z$ and each of these lines contains a unique point at distance 2 from $x$, there are nine possibilities for $u$. For each such $u \in \Gamma_2(x)$, there is a unique choice for $v$, namely the unique neighbor of $u$ and $x$. So, the number of desired paths equals 9. On the other hand, we also see that the number of possible choices for $v$ is equal to $t_x+1$, the total number of lines through $x$ (which could be infinite). Indeed, each line through $x$ contains a unique point $v$ at distance 2 from $z$. By Lemma \ref{lem3.10}, each such point $v$ has a unique common neighbor $u$ with $z$, showing that the number of desired paths is also equal to $t_x+1$. So, the point $x$ is incident with precisely $t_x+1=9$ lines.
\end{proof}

\begin{lem} \label{new}
Let $\mathcal{S}$ be a finite near hexagon of order $(s,t)$ having $v$ points. Then $v \leq (s+1)(s^2t^2+st+1)$, with equality if and only if $\mathcal{S}$ is a generalized hexagon.
\end{lem}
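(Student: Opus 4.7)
The plan is to fix an arbitrary point $x\in\mathcal{P}$ and to bound each of $|\Gamma_i(x)|$ for $i=0,1,2,3$ by repeated double counting, using throughout the following consequence of (NP2): on any line $L$, the distances from $x$ to the $s+1$ points of $L$ take the value $k:=\dist(x,\pi_L(x))$ at the unique closest point and $k+1$ at each of the remaining $s$ points (uniqueness in (NP2) rules out a smaller value, the triangle inequality rules out a larger one). In particular, $|\Gamma_0(x)|=1$ and $|\Gamma_1(x)|=s(t+1)$.

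To bound $|\Gamma_2(x)|$, I count the pairs $(y,z)$ with $y\in\Gamma_1(x)$, $z\in\Gamma_2(x)$, $y\sim z$. For each $y\in\Gamma_1(x)$ exactly one of the $t+1$ lines through $y$ is $xy$, and on each of the remaining $t$ lines $y$ is the closest point to $x$, so the other $s$ points of such a line lie in $\Gamma_2(x)$. Hence the pair count equals $s(t+1)\cdot ts=s^2t(t+1)$. Writing $c_z:=|\Gamma_1(x)\cap\Gamma_1(z)|$ for $z\in\Gamma_2(x)$, the same pairs are counted as $\sum_{z\in\Gamma_2(x)}c_z$, and since $c_z\geq 1$ I obtain $|\Gamma_2(x)|\leq s^2t(t+1)$, with equality iff $c_z=1$ for every $z\in\Gamma_2(x)$.

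The same idea bounds $|\Gamma_3(x)|$ but requires a small case split. For $y\in\Gamma_2(x)$ and a line $L$ through $y$, either $\pi_L(x)$ is a common neighbor of $x$ and $y$ (type (a), with $y$ and $s-1$ further points of $L$ lying in $\Gamma_2(x)$) or $\pi_L(x)=y$ (type (b), with the remaining $s$ points of $L$ in $\Gamma_3(x)$); there are $c_y$ lines of type (a) and thus $t+1-c_y$ of type (b). Summing $s(t+1-c_y)$ over $y$ and using $\sum_y c_y=s^2t(t+1)$, the number of pairs $(y,z)\in\Gamma_2(x)\times\Gamma_3(x)$ with $y\sim z$ equals $s(t+1)\bigl(|\Gamma_2(x)|-s^2t\bigr)$. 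For the dual count I will prove the subclaim that for every $z\in\Gamma_3(x)$ and every line $L\ni z$, one has $z\neq\pi_L(x)$: otherwise the other $s$ points of $L$ would be at distance $4$ from $x$, contradicting the diameter-$3$ assumption. Therefore every line through $z$ contains exactly one point of $\Gamma_2(x)$, giving $(t+1)|\Gamma_3(x)|$ such pairs, and equating the two expressions yields $|\Gamma_3(x)|=s\bigl(|\Gamma_2(x)|-s^2t\bigr)$.

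Summing, $v=1+s(t+1)+(s+1)|\Gamma_2(x)|-s^3t$; substituting $|\Gamma_2(x)|\leq s^2t(t+1)$ and expanding, the right-hand side collapses to $(s+1)(s^2t^2+st+1)$ as claimed. Equality forces $|\Gamma_2(x)|=s^2t(t+1)$, hence $c_z=1$ for every $z\in\Gamma_2(x)$; since $x$ was arbitrary this says that every pair of points at distance $2$ has a unique common neighbor, which is exactly (GH2), while (GH1) is automatic since $t\geq 1$ in any near hexagon. Thus equality characterizes generalized hexagons. The only genuinely subtle ingredient is the diameter-$3$ input used to preclude $z=\pi_L(x)$ when $z\in\Gamma_3(x)$; the rest is routine bookkeeping.
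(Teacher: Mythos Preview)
Your proof is correct and follows essentially the same approach as the paper: fix a point $x$, bound $|\Gamma_i(x)|$ by double counting using (NP2), and sum. The only difference is cosmetic: the paper bounds $|\Gamma_3(x)|\le \frac{st}{t+1}\,|\Gamma_2(x)|$ as a second inequality, whereas you derive the exact identity $|\Gamma_3(x)|=s\bigl(|\Gamma_2(x)|-s^2t\bigr)$, so that the sole inequality you use is $|\Gamma_2(x)|\le s^2t(t+1)$; this makes the equality analysis marginally cleaner but is otherwise the same argument.
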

\begin{proof}
Let $x$ be an arbitrary point of $\mathcal{S}$. Then $|\Gamma_0(x)|=1$ and $|\Gamma_1(x)|=s(t+1)$. Since every point of $\Gamma_1(x)$ is collinear with $st$ points of $\Gamma_2(x)$ and every point of $\Gamma_2(x)$ is collinear with at least one point of $\Gamma_1(x)$, we have $|\Gamma_2(x)| \leq |\Gamma_1(x)| \cdot st = s^2t(t+1)$. Every point of $\Gamma_2(x)$ is collinear with at most $st$ points of $\Gamma_3(x)$, and every point $y \in \Gamma_3(x)$ is collinear with precisely $t+1$ points of $\Gamma_2(x)$ (one on each line through $y$), showing that $|\Gamma_3(x)| \leq \frac{|\Gamma_2(x)| \cdot st}{t+1} \leq \frac{s^2t(t+1)}{t+1} st = s^3t^2$. It follows that $v \leq 1+ s(t+1) + s^2t(t+1) + s^3 t^2 = (s+1)(s^2t^2+st+1)$. Equality holds if and only if $|\Gamma_1(x) \cap \Gamma_1(y)| =1$ for every point $y \in \Gamma_2(x)$. As the above holds for any point $x$ of $\mathcal{S}$, we see that the lemma must hold. 
\end{proof}

\begin{thm}
The near hexagon $\mathcal{N}$ is a generalized hexagon and hence isomorphic to $T(2,8)$.
\end{thm}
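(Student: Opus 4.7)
The plan is to compute $|\mathcal{N}|$ directly, show that it equals the upper bound given by Lemma~\ref{new} for a near hexagon of order $(2,8)$, and then invoke the equality case of Lemma~\ref{new} to conclude that $\mathcal{N}$ is a generalized hexagon. The isomorphism with $T(2,8)$ will then follow immediately from the Cohen--Tits uniqueness theorem for generalized hexagons of order $(2,8)$ cited in Section~\ref{sec1}.

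For the point count, I would first use Lemma~\ref{lem3.3}(1) to decompose $\mathcal{N} = \mathcal{H} \cup \Gamma_1(\mathcal{H})$, so that only $|\Gamma_1(\mathcal{H})|$ remains to be determined. Fix a point $x \in \mathcal{H}$. By Lemma~\ref{lem3.11}, $x$ lies on exactly $9$ lines of $\mathcal{N}$; three of these are lines of $\mathcal{H}$ (since $\mathcal{H}$ has order $(2,2)$), and by Corollary~\ref{co3.4} the remaining six are of Type ACC, each contributing two points of $\Gamma_1(\mathcal{H})$. Hence $x$ has exactly $12$ neighbors outside $\mathcal{H}$. By Lemma~\ref{lem3.3}(3), every point of $\Gamma_1(\mathcal{H})$ has a unique neighbor in $\mathcal{H}$, so a standard double count of incident pairs $(x,y)$ with $x \in \mathcal{H}$ and $y \in \Gamma_1(\mathcal{H})$ yields $|\Gamma_1(\mathcal{H})| = 63 \cdot 12 = 756$, and therefore $|\mathcal{N}| = 63 + 756 = 819$.

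Next, Lemma~\ref{lem3.11} gives that $\mathcal{N}$ has order $(2,8)$, so Lemma~\ref{new} supplies the bound
\[
|\mathcal{N}| \leq (s+1)(s^2 t^2 + s t + 1) = 3 \cdot (256 + 16 + 1) = 819.
\]
Since equality holds, the equality clause of Lemma~\ref{new} forces $\mathcal{N}$ to be a generalized hexagon. As $T(2,8)$ is the unique generalized hexagon of order $(2,8)$ up to isomorphism, we conclude that $\mathcal{N} \cong T(2,8)$.

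I do not anticipate any genuine obstacle in carrying out this argument: all the nontrivial work has already been done in the preceding lemmas, which pin down the type of every point and line of $\mathcal{N}$ (Lemma~\ref{lem3.3} and Corollary~\ref{co3.4}), exclude quads meeting $\mathcal{H}$ (Lemma~\ref{lem3.8}), and fix the order of $\mathcal{N}$ (Lemma~\ref{lem3.11}). The final theorem is then essentially a bookkeeping step combining a double count with the equality criterion of Lemma~\ref{new}.
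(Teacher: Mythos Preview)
Your proposal is correct and follows essentially the same route as the paper: establish order $(2,8)$ via Lemma~\ref{lem3.11}, count $|\mathcal{N}| = |\mathcal{H}| + |\mathcal{H}|\cdot 12 = 819$ using that every point outside $\mathcal{H}$ has a unique neighbor in $\mathcal{H}$ while every point of $\mathcal{H}$ has $2\cdot(9-3)=12$ neighbors outside, and then invoke the equality case of Lemma~\ref{new} together with the Cohen--Tits uniqueness of $T(2,8)$. The paper's proof is just a more compressed version of exactly this argument.
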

\begin{proof}
We know that $\mathcal{N}$ has order $(2,8)$. So, in order to show that $\mathcal{N}$ is a generalized hexagon, it suffices by Lemma \ref{new} to prove that $\mathcal{N}$ has 819 points. This indeed holds. Every point outside $\mathcal{H}$ is collinear with a unique point of $\mathcal{H}$, and every point of $\mathcal{H}$ is collinear with precisely $2 \cdot (9-3)$ points of $\Gamma_1(\mathcal{H})$. So, the total number of points of $\mN$ is equal to $|\mH| + |\mH| \cdot 12 = 819$.
\end{proof}

\section{Near hexagons containing an isometrically embedded $H(2)$} \label{sec4}

In this section, we study near hexagons of order $(2,t)$ that contain the generalized hexagon $H(2)$ as an isometrically embedded subgeometry. 

\subsection{The valuation geometry of $H(2)$} \label{sec4.1}

The valuation geometry $\mV$ of $H(2)$ is defined as the partial linear space whose points are the valuations of $H(2)$ and whose lines are all the triples $\{ f_1,f_2,f_3 \}$, where $f_1$, $f_2$ and $f_3$ are three mutually distinct valuations of $H(2)$ such that $f_1$ and $f_2$ are two neighboring valuations and $f_3 = f_1 \ast f_2$. Again, we have used the computer algebra system GAP to determine all valuations and all lines of the valuation geometry $\mathcal{V}$, see \cite{Bi-bdb} and the appendix. The results are summarized in Tables \ref{tab:3} and \ref{tab:4}, where we have followed the same notational conventions as before. There are seven isomorphism classes of valuations and twenty line types in $\mV$. Unlike $H^D(2)$, we do have ovoids in $H(2)$ which give rise to an ovoidal (Type $C$) valuation. 

\begin{table}[!htbp]
\begin{center}
\begin{tabular}{ | l || l | l | l | l | l | }
 \hline
  Type & $\#$ & $M_f$ & $|\mO_f|$ & $|H_f|$  & Value Distribution \\ \hline \hline
$A$ & $63$ & $3$ & $1$ & $31$ & $[ 1, 6, 24, 32 ]$ \\ \hline
$B_1$ & $126$ & $2$ & $1$ & $23$ & $[ 1, 22, 40, 0 ]$ \\ \hline
$B_2$ & $252$ & $2$ & $3$ & $27$ & $[ 3, 24, 36, 0 ]$ \\ \hline
$B_3$ & $504$ & $2$ & $4$ & $29$ & $[ 4, 25, 34, 0 ]$ \\ \hline
$B_4$ & $72$ & $2$ & $7$ & $35$ & $[ 7, 28, 28, 0 ]$ \\ \hline
$B_5$ & $378$ & $2$ & $9$ & $39$ & $[ 9, 30, 24, 0 ]$ \\ \hline
$C$ & $36$ & $1$ & $21$ & $21$ & $[ 21, 42, 0, 0 ]$ \\ \hline
\end{tabular}
\end{center}
\caption{The valuations of $H(2)$}
\label{tab:3}
\end{table}

\begin{table}[!htbp]
\begin{center}
\begin{tabular}{| l || l | l | l | l | l | l | l |}
\hline 
Type & $A$ & $B_1$ & $B_2$ & $B_3$ & $B_4$ & $B_5$ & $C$ \\ \hline \hline
$AAA$& $3$& - & - & - & - & - & - \\ \hline 
$AB_1B_1$& $1$& $1$& - & - & - & - & - \\ \hline 
$AB_2B_2$& $6$& - & $3$& - & - & - & - \\ \hline 
$AB_3B_3$& $16$& - & - & $4$& - & - & - \\ \hline 
$AB_4B_4$& $4$& - & - & - & $7$& - & - \\ \hline 
$AB_5B_5$& $3$& - & - & - & - & $1$& - \\ \hline 
$B_1B_1B_1$& - & $3$& - & - & - & - & - \\ \hline 
$B_1B_1B_2$& - & $16$& $4$& - & - & - & - \\ \hline 
$B_1B_1B_5$& - & $6$& - & - & - & $1$& - \\ \hline 
$B_1B_2B_4$& - & $4$& $2$& - & $7$& - & - \\ \hline 
$B_1B_3B_3$& - & $12$& - & $6$& - & - & - \\ \hline 
$B_1B_3C$& - & $12$& - & $3$& - & - & $42$\\ \hline 
$B_2B_2B_2$& - & - & $12$& - & - & - & - \\ \hline 
$B_2B_2B_5$& - & - & $6$& - & - & $2$& - \\ \hline 
$B_2B_3B_3$& - & - & $10$& $10$& - & - & - \\ \hline 
$B_2CC$& - & - & $1$& - & - & - & $14$\\ \hline 
$B_3B_3B_5$& - & - & - & $3$& - & $2$& - \\ \hline 
$B_4B_4C$& - & - & - & - & $1$& - & $1$\\ \hline 
$B_5B_5B_5$& - & - & - & - & - & $1$& - \\ \hline 
$B_5CC$& - & - & - & - & - & $1$& $21$\\ \hline 
\end{tabular}
\end{center}
\caption{The lines of the valuation geometry of $H(2)$} 
\label{tab:4}
\end{table}

\subsection{Proof of those main results that involve a subhexagon $H(2)$} \label{sec4.2}

In this section $\mN$ denotes a near hexagon of order $(2,t)$ that has a subhexagon $\mH$ isomorphic to $H(2)$ isometrically embedded in it as a full subgeometry. We will use the same convention as before regarding the types for the points and lines of $\mN$. We then have:

\begin{lem} \label{lem4.1}
Every point of $\mN$ has one of the seven types mentioned in Table~\ref{tab:3}. For a line $L$ of $\mN$ if all the points on $L$ induce distinct valuations then $L$ has one of the twenty types mentioned in Table~\ref{tab:4}. 
\end{lem}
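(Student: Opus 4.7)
The plan is to reduce both claims immediately to the content of the valuation geometry $\mathcal{V}$ of $H(2)$ recorded in Tables~\ref{tab:3} and~\ref{tab:4}, exactly as was done for $H^D(2)$ in Lemma~\ref{lem3.2}. Since $\mH$ is isometrically embedded in $\mN$ as a full subgeometry, Lemma~\ref{lem2.2}(1) guarantees that every point $x$ of $\mN$ induces a valuation $f_x$ of $\mH$ via $f_x(y) = \dist(x,y) - \dist(x,\mathcal{H})$. By construction of $\mathcal{V}$, the valuation $f_x$ is a point of $\mathcal{V}$, and Table~\ref{tab:3} lists (up to isomorphism) every such valuation, since that table was obtained by an exhaustive GAP computation of all valuations of $H(2)$. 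Therefore the type of $x$, which by definition is the isomorphism type of $f_x$, must be one of $A,B_1,B_2,B_3,B_4,B_5,C$, proving the first assertion.

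For the second assertion, let $L = \{x_1,x_2,x_3\}$ be a line of $\mN$ whose three points induce three distinct valuations $f_{x_1},f_{x_2},f_{x_3}$. Since every line of $\mN$ has three points (the near hexagon has order $(2,t)$), Lemma~\ref{lem2.2}(3) applies and gives $f_{x_1} \ast f_{x_2} = f_{x_3}$, so that $\{f_{x_1},f_{x_2},f_{x_3}\}$ is by definition a line of $\mathcal{V}$. As Table~\ref{tab:4} exhaustively records the isomorphism types of all lines of $\mathcal{V}$ (again through the GAP computation described in the appendix), the line type of $L$ must appear among the twenty listed triples.

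There is no serious obstacle here, only a book-keeping one: one has to trust the computer enumeration in Tables~\ref{tab:3} and~\ref{tab:4} as complete, which is justified by the explicit GAP code referenced at~\cite{Bi-bdb} and the appendix. The statement and proof are exact analogues of Lemma~\ref{lem3.2}; the extra content on the $H(2)$ side is only that there are now seven point types and twenty line types instead of four and eleven, reflecting the richer valuation geometry (in particular the existence of ovoidal valuations of Type $C$).
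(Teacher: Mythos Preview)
Your proposal is correct and follows exactly the (implicit) approach of the paper: Lemma~\ref{lem4.1}, like its analogue Lemma~\ref{lem3.2}, is stated without proof because it is immediate from Lemma~\ref{lem2.2} together with the exhaustiveness of the computer enumeration recorded in Tables~\ref{tab:3} and~\ref{tab:4}. Your write-up simply unpacks this reasoning, correctly invoking Lemma~\ref{lem2.2}(1) for points and Lemma~\ref{lem2.2}(2),(3) for lines.
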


\begin{lem} \label{lem4.2}
Every point of $\mN$ is at distance at most $2$ from $\mH$. Moreover, 
\begin{itemize}
\item points in $\mH$ are of Type $A$ (classical);

\item points at distance $1$ from $\mH$ are of Type $B_i$ for some $i \in \{1,2,3,4,5\}$;

\item points at distance $2$ from $\mH$ are of Type $C$ (ovoidal). 
\end{itemize}
\end{lem}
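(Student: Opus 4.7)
The proof parallels that of Lemma \ref{lem3.3}, except that $H(2)$ admits ovoidal (Type $C$) valuations, which one must rule out at distance $1$.

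First, if $x\in\mH$ then the isometric embedding gives $f_x(y)=\dist_{\mN}(x,y)=\dist_{\mH}(x,y)$ for every $y\in\mH$, so $f_x$ is the classical valuation centred at $x$; its distance distribution $[1,6,24,32]$ in $H(2)$ is precisely the one listed for Type $A$ in Table \ref{tab:3}. Now let $x\notin\mH$ and put $d:=\dist(x,\mH)\ge 1$. For every $y\in\mH$, $\dist(x,y)=d+f_x(y)\le 3$, so $M_{f_x}\le 3-d$. Table \ref{tab:3} shows that every valuation of $H(2)$ satisfies $M_f\ge 1$, forcing $d\le 2$. In particular, if $d=2$ then $M_{f_x}=1$, and among the seven classes of Table \ref{tab:3} only Type $C$ has $M_f=1$, so this case is settled.

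For $d=1$, the bound $M_{f_x}\le 2$ rules out Type $A$ and leaves Types $B_1,\dots,B_5,C$. I would eliminate Type $C$ as follows. Assume for contradiction that $f_x$ is ovoidal. Since $|\mO_{f_x}|=21>0$, one can choose $y\in\mO_{f_x}$, which lies in $\mH$ and is adjacent to $x$. Let $z$ be the third point on the $\mN$-line $xy$. The point $z$ cannot lie in $\mH$: otherwise $y,z\in\mH$ would be collinear in $\mN$, hence collinear in $\mH$ by the isometric embedding, and their unique $\mH$-line would coincide with $xy$ by partial linearity in $\mN$, which together with the fullness of $\mH$ in $\mN$ would force $x\in\mH$, a contradiction. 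By Lemma \ref{lem2.2}(3), either $f_x=f_y=f_z$ or $\{f_x,f_y,f_z\}$ is a $\mV$-line. The former fails because $f_x$ is of Type $C$ whereas $f_y$ is classical of Type $A$; the latter fails because an inspection of Table \ref{tab:4} shows that no $\mV$-line contains both a Type $A$ and a Type $C$ valuation. Hence $f_x$ must be of Type $B_i$ for some $i\in\{1,\dots,5\}$.

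The only non-routine ingredient is this final observation about Table \ref{tab:4}, but it is an immediate visual check: Type $A$ appears in that table only in line labels $AAA$ and $AB_iB_i$, and Type $C$ never occurs alongside a Type $A$ in any line label. Everything else is bookkeeping with the diameter bound and the catalogue of valuations in Table \ref{tab:3}.
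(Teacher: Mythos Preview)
Your proof is correct. The overall structure matches the paper's argument exactly up to the point where Type~$C$ must be excluded at distance~$1$; there you take a genuinely different route.

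The paper argues geometrically: it observes that an ovoid of $H(2)$ must contain two points $y_1,y_2$ at distance~$3$ in $\mH$ (constructed by walking along a short path), and then notes that $x$ would be a common neighbour of $y_1$ and $y_2$ in $\mN$, contradicting the isometric embedding. This argument uses only Table~\ref{tab:3} (to know $f_x$ is ovoidal) and the girth of a generalized hexagon; it does not require Table~\ref{tab:4} at all.

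Your argument instead looks at the line $xy$ through $x$ and a single point $y\in\mO_{f_x}$, and uses the classification of $\mV$-line types in Table~\ref{tab:4} to see that no $\mV$-line mixes Types~$A$ and~$C$. This is shorter and avoids constructing a second ovoid point, but it leans on the computer-generated line catalogue, whereas the paper's version of this particular step is computer-free once the valuation types themselves are known. Both approaches are valid; yours is a clean shortcut given that Table~\ref{tab:4} is available anyway.
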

\begin{proof}
If $x$ is a point of $\mathcal{N}$, then the maximal distance from $x$ to a point of $\mH$ is equal to $\dist(x,\mH) + M_{f_x}$. Since $\dist(x,\mH) + M_{f_x} \leq 3$ and $M_{f_x} \geq 1$, we have $\dist(x,\mathcal{H}) \leq 2$. If $\dist(x,\mathcal{H})=2$, then $M_{f_x}=1$ and so $f_x$ and $x$ have Type C. If $x \in \mathcal{H}$, then $f_x$ is a classical valuation since $\mathcal{H}$ is isometrically embedded into $\mathcal{N}$. Finally, suppose that $\dist(x,\mathcal{H})=1$. Then $M_{f_x} \leq 2$ and so $x$ has Type C or $B_i$ for some $i \in \{ 1,2,3,4,5 \}$. We prove that the former case cannot occur. Indeed, if $x$ were ovoidal, then $\mathcal{O}_{f_x}$ would be an ovoid, and there would exist two points $y_1$ and $y_2$ in $\mathcal{O}_{f_x}$ at distance 3 from each other. (Indeed, starting from a point $y_1 \in \mathcal{O}_{f_x}$, select points $u$, $v$ and $y_2$ of $\mH$ such that $u \in \Gamma_1(y_1)$, $v \in \Gamma_1(u) \setminus \mathcal{O}_{f_x}$ with $y_1u \not= uv$ and $y_2 \in \Gamma_1(v) \cap \mathcal{O}_{f_x}$ with $vy_2 \not= uv$). Since $\dist_{\mH}(y_1,y_2)=3$, we also have $\dist_{\mN}(y_1,y_2)=3$, but this would be in contradiction with the fact that $x$ is a common neighbor of $y_1$ and $y_2$.
\end{proof}

\begin{lem} \label{lem4.3}
If $L = \{ x_1,x_2,x_3 \}$ is a line of $\mN$, then $f_{x_1}$, $f_{x_2}$ and $f_{x_3}$ are mutually distinct and hence $\{ f_{x_1},f_{x_2},f_{x_3} \}$ is a line of $\mV$.
\end{lem}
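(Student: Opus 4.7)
The plan is to adapt the short argument used for Lemma \ref{lem3.5}, with the case analysis now enlarged to accommodate the seven isomorphism classes of valuations listed in Table \ref{tab:3} rather than just two. I would argue by contradiction: suppose the three valuations $f_{x_1}$, $f_{x_2}$, $f_{x_3}$ are all equal to some common valuation $f$ of type $T$.

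The key observation is that by Lemma \ref{lem4.2} the type of $f_{x_i}$ determines the distance $\dist(x_i,\mathcal{H})$: we have $\dist(x_i,\mathcal{H}) = 0$ when $T = A$, $\dist(x_i,\mathcal{H}) = 1$ when $T = B_j$ for some $j \in \{1,2,3,4,5\}$, and $\dist(x_i,\mathcal{H}) = 2$ when $T = C$. Since the three valuations coincide, the points $x_1, x_2, x_3$ share a common distance $d \in \{0, 1, 2\}$ from $\mathcal{H}$, and in particular either all three lie in $\mathcal{H}$ or none of them does.

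If $d = 0$, then each $x_i$ lies in $\mathcal{H}$ and $f$ is the classical valuation whose unique zero point is $x_i$, forcing $x_1 = x_2 = x_3$, contradicting the fact that $L$ is a line with three distinct points. If $d \geq 1$, I would pick any $y \in \mathcal{O}_f$, which is nonempty since $f$ is a valuation. Using the identity $f_{x_i}(y) = \dist(x_i,y) - \dist(x_i,\mathcal{H})$ from Lemma \ref{lem2.2}(1), each $x_i$ satisfies $\dist(x_i,y) = d$. Because $d \geq 1$ and $y \in \mathcal{H}$ while no $x_i$ lies in $\mathcal{H}$, the point $y$ cannot be on $L$, so Property (NP2) applied to $y$ and $L$ would require a unique point of $L$ closest to $y$, contradicting the fact that all three points of $L$ are equidistant from $y$.

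There is no real obstacle here: the argument is a direct generalization of the one used for $H^D(2)$, with the five $B_j$ types handled uniformly by the single observation that $|\mathcal{O}_f| \geq 1$ for every valuation type and that the common distance to $\mathcal{H}$ is forced by the equality of the valuations themselves.
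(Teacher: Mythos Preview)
Your proof is correct and follows essentially the same approach as the paper: pick a point $y \in \mathcal{O}_f$, use Lemma~\ref{lem4.2} to conclude that the three points $x_1,x_2,x_3$ have a common distance $d$ from $\mathcal{H}$ and hence a common distance from $y$, and derive a contradiction with (NP2). The paper's proof handles the $d=0$ case together with the others (the equal-distance contradiction works uniformly), whereas you treat it separately, but this is only a cosmetic difference.
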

\begin{proof}
Suppose $f:=f_{x_1}=f_{x_2}=f_{x_3}$ and let $y$ be an arbitrary point of $\mathcal{O}_f$. By Lemma \ref{lem4.2}, $\dist(y,x_1)=\dist(y,x_2)=\dist(y,x_3)$, but this would be in contradiction with (NP2). 
\end{proof}

\begin{thm} \label{theo4.4}
There exists no generalized hexagon of order $(2,t)$ containing $H(2)$ as an isometrically embedded proper subgeometry. 
\end{thm}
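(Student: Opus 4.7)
The plan is to assume $\mathcal{N}$ is a generalized hexagon of order $(2,t)$ properly containing $\mathcal{H}\cong H(2)$ as an isometrically embedded subgeometry and derive a contradiction by combining the uniqueness axiom (GH2) with the structural information encoded in Tables \ref{tab:3} and \ref{tab:4}. Everything in Section \ref{sec4.2} so far has used only the near hexagon axioms; (GH2) is the new ingredient that must be brought to bear.

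First I would pin down the point types outside $\mathcal{H}$. Let $y\in\Gamma_1(\mathcal{H})$ and suppose $y$ had two distinct neighbors $u,v\in\mathcal{H}$. These cannot be collinear (otherwise (NP2) would force $y$ onto the line $uv\subset\mathcal{H}$), so $d_{\mathcal{N}}(u,v)=2$; by isometric embedding $d_{\mathcal{H}}(u,v)=2$, so $\mathcal{H}$ itself provides a common neighbor $w$ of $u$ and $v$, and (GH2) makes this common neighbor unique in $\mathcal{N}$, forcing $y=w\in\mathcal{H}$, a contradiction. Hence $|\mathcal{O}_{f_y}|=1$ and every point of $\Gamma_1(\mathcal{H})$ has Type $B_1$. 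Now for $x\in\Gamma_2(\mathcal{H})$ of Type $C$, every line through $x$ is disjoint from $\mathcal{H}$ and has its three points of Type $B_1$ or $C$, so by Lemma \ref{lem4.3} it would induce a $\mathcal{V}$-line of type $B_1B_1C$, $B_1CC$, or $CCC$. Inspection of Table \ref{tab:4} shows that none of these line types occurs in $\mathcal{V}$, so $x$ would carry no line at all, contradicting (GH1). Consequently $\Gamma_2(\mathcal{H})=\emptyset$, and the only point types in $\mathcal{N}$ are $A$ and $B_1$, with line types restricted to $AAA$, $AB_1B_1$, and $B_1B_1B_1$.

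To bound $t$, fix $y\in\Gamma_1(\mathcal{H})$. Through $y$ there is exactly one line of type $AB_1B_1$ (namely the line joining $y$ to its unique neighbor in $\mathcal{H}$) and $t$ lines of type $B_1B_1B_1$. The key claim is that these $t$ lines induce pairwise distinct $\mathcal{V}$-lines through $f_y$: otherwise $y$ would admit two distinct $B_1$-neighbors $z_1,z_2$ with $f_{z_1}=f_{z_2}$ and hence $\mathcal{O}_{f_{z_1}}=\mathcal{O}_{f_{z_2}}=\{x\}$ for some $x\in\mathcal{H}$; but then $z_1,z_2$ would be two distinct common neighbors of $x$ and $y$, violating (GH2). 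Since Table \ref{tab:4} lists exactly $3$ $\mathcal{V}$-lines of type $B_1B_1B_1$ through any $B_1$-point, we conclude $t\leq 3$. On the other hand, $\mathcal{H}$ being a full subgeometry forces $t\geq 2$ (the $3$ lines of $\mathcal{H}$ through any of its points are among the $t+1$ lines of $\mathcal{N}$ through it). Combined with the classification of finite generalized hexagons of order $(2,t)$, which requires $t\in\{1,2,8\}$, we obtain $t=2$; but then $\mathcal{N}$ has order $(2,2)$ and the same $63$ points as $\mathcal{H}$, so $\mathcal{N}=\mathcal{H}$, contradicting properness.

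The main obstacle will be the type analysis in the first step: both the restriction of $\Gamma_1(\mathcal{H})$-points to Type $B_1$ and the elimination of Type $C$ points rely essentially on combining (GH2) with the nonexistence of several $\mathcal{V}$-line types in Table \ref{tab:4}. Once those restrictions are established, the counting argument giving $t\leq 3$ and the final contradiction via the classification of finite hexagons of small order are short bookkeeping steps.
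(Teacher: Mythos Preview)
Your proof is correct. The first two steps—eliminating Types $B_i$ for $i>1$ via (GH2), then Type $C$ via the absence of suitable $\mathcal{V}$-line types in Table~\ref{tab:4}—coincide with the paper's. The divergence is in the final step. The paper, having reduced to Types $A$ and $B_1$, fixes a $B_1$-point $x$ with unique neighbour $x'\in\mathcal{H}$ and observes that the $22-6=16$ points of $\mathcal{H}$ at distance $2$ from $x$ not collinear with $x'$ each yield (by (GH2)) a distinct $B_1$-neighbour of $x$ with a distinct induced valuation; since these $16$ valuations must sit on the at most $3$ $\mathcal{V}$-lines of type $B_1B_1B_1$ through $f_x$ (Table~\ref{tab:4}), which accommodate only $6$ such valuations, one obtains an immediate contradiction. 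You instead use the same $\mathcal{V}$-line count only to bound $t\le 3$, and then invoke the known restriction $t\in\{1,2,8\}$ for finite generalized hexagons of order $(2,t)$ to force $t=2$ and $\mathcal{N}=\mathcal{H}$. Both routes are valid; the paper's is self-contained, while yours appeals to an external eigenvalue/integrality result. Note that your own distinctness argument, combined with the paper's count of $16$ forced $B_1$-neighbours of $x$, already yields at least $8$ lines of type $B_1B_1B_1$ through $x$ against your upper bound of $3$, so the classification step can in fact be bypassed.
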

\begin{proof}
We suppose that $\mN$ is a generalized hexagon. We show that all points of $\mN$ are contained in $\mH$ by the following sequence of steps.
\begin{enumerate}
\item \textit{No point of $\mN$ has Type $B_i$ with $i > 1$.}
      Let $x$ be such a point. 
      It must necessarily be at distance $1$ from $\mH$ by Lemma~\ref{lem4.2}. 
      By Table~\ref{tab:3} there exist two points $y$ and $z$ in $\mO_{f_x}$ which must necessarily be collinear with $x$. 
      Therefore $d_{\mN}(y,z) = 2$ by (NP2). Hence, $d_{\mH}(y,z) = 2$ and there exists a common neighbor of $y$ and $z$ inside $\mH$ and thus distinct from $x$.
      This contradicts (GH2).
      
\item \textit{There is no point in $\mN$ of Type $C$.} 
      From Table~\ref{tab:4} we see that every line through such a point would contain a point of Type $B_i$ for some $i > 1$, but no such points exist by the previous step.

\item \textit{There is no point in $\mN$ of Type $B_1$.}
      Let $x$ be a point of Type $B_1$ in $\mN$, necessarily at distance $1$ from $\mH$. 
      Since $|\mO_{f_x}| = 1$, there is a unique point $x'$ in $\mH$ collinear with $x$. 
      The valuation $f_x$ of $\mH$ has $22$ points of value $1$. 
      This is equivalent to the statement that there are $22$ points in $\mH$ at distance $2$ from $x$ since $f_x(y) = d(x,y) - 1$ for all points $y$ in $\mH$.  
      Six of these $22$ points come from the neighbors of $x'$ in $\mH$ and hence the remaining sixteen points must have a common neighbor with $x$ which neither lie in $\mH$ nor on the line $xx'$. 
      These common neighbors give rise to sixteen points of Type $B_1$ collinear with $x$. 
      These sixteen points induce sixteen distinct Type $B_1$ valuations each of which is incident together with $f_x$ with a $\mathcal{V}$-line of Type $B_1B_1B_1$.
      But, from the entry in row $B_1B_1B_1$ and column $B_1$ in Table~\ref{tab:4} we see that there can be at most six such Type $B_1$ valuations. \\
\end{enumerate}      
Therefore all points of $\mN$ have Type $A$ and hence by Lemma~\ref{lem4.2} they are all contained in $\mH$. This shows that $\mN$ is in fact equal to $\mH$.
\end{proof}

\bigskip \noindent Theorem~\ref{theo4.4} completes the proof of the \textit{non-existence of semi-finite generalized hexagons of order $(2,t)$ containing a subhexagon of order $2$.}  We can in fact prove more. We show below that every near hexagon $\mN$ containing a generalized hexagon $\mH \cong H(2)$ isometrically and fully embedded in it must also be finite. In fact, the reasoning below would give an alternative proof of Theorem \ref{theo4.4}, if one would invoke the classification of all finite generalized hexagons of order $(2,t)$ (\cite{CT}) and the observation that none of these finite generalized hexagons contains $H(2)$ as a proper subhexagon.

\begin{lem} \label{lem4.5}
For every $i \in \{ 2,3,4,5 \}$, there are only finitely many points in $\mN$ of Type $B_i$. 
\end{lem}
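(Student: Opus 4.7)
The plan is to reduce the claim to a counting argument: by Table~\ref{tab:3}, the number of valuations of each of the types $B_2$, $B_3$, $B_4$, $B_5$ is finite, so it suffices to show that for every such valuation $f$, the set $X_f := \{ x \in \mN : f_x = f \}$ has cardinality bounded by an absolute constant.

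First, I would unpack what membership in $X_f$ entails. By Lemma~\ref{lem4.2}, every $x \in X_f$ lies at distance $1$ from $\mH$, so by definition of $f_x$ one has $f_x(y) = \dist(x,y) - 1$ for every $y \in \mH$. In particular, each $y \in \mO_f$ satisfies $\dist(x,y) = 1$, so every zero of $f$ is a neighbor of $x$ in $\mN$. Consulting Table~\ref{tab:3}, one sees that $|\mO_f| \geq 3$ for each $i \in \{ 2,3,4,5 \}$, so two distinct points $y_1, y_2 \in \mO_f$ can be chosen.

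Next, I would argue that $y_1$ and $y_2$ are at distance exactly $2$ in $\mN$. On the one hand they share the common neighbor $x$, so $\dist(y_1, y_2) \leq 2$; on the other hand they cannot be collinear, because if $L$ were a common line then $L$ would lie in $\mH$ (as $\mH$ is a full subgeometry, any line through two points of $\mH$ belongs to $\mH$), hence $x \notin L$, and then $y_1$ and $y_2$ would be two distinct points of $L$ each at distance $1$ from $x$, violating the uniqueness part of (NP2). Applying Lemma~\ref{lem2.1} therefore gives $|\Gamma_1(y_1) \cap \Gamma_1(y_2)| \leq 5$, and since $X_f \subseteq \Gamma_1(y_1) \cap \Gamma_1(y_2)$ this yields $|X_f| \leq 5$. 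Summing over the finitely many valuations $f$ of types $B_2, B_3, B_4, B_5$ gives the lemma.

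The argument involves no real obstacle: the only step that requires a moment's thought is the non-collinearity of $y_1$ and $y_2$, and that is immediate from (NP2) together with the full-subgeometry hypothesis. Everything else is routine bookkeeping against the tables computed in Section~\ref{sec4.1}.
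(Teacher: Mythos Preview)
Your proof is correct and follows essentially the same approach as the paper's: pick two distinct points of $\mO_{f_x}$, observe (via (NP2)) that they lie at distance~$2$, and then invoke Lemma~\ref{lem2.1} to bound the number of common neighbors. The only cosmetic difference is that you partition the $B_i$-points according to which valuation they induce and bound each fiber by~$5$, whereas the paper partitions them according to an ordered pair of zero-points in $\mH$ and bounds each fiber by~$4$; both yield a finite count for the same reason.
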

\begin{proof}
Let $x$ be a point of Type $B_i$ for some $i > 1$.  From Table~\ref{tab:3} there are at least two distinct points $y$ and $z$ of $\mH$ in $\mO_{f_x}$, necessarily collinear with $x$ by Lemma~\ref{lem4.2}. By (NP2) $y$ and $z$ must be at distance $2$ from each other. Now, $y$ and $z$ have at most five common neighbors (Lemma \ref{lem2.1}) and one of these must be contained in $\mH$. From this it follows that the number of points of Type $B_i$ for some $i \in \{ 2,3,4,5 \}$ is at most $4$ times the number of unordered pairs $\{ p,q \}$ of points of $\mH$ at distance 2 from each other, i.e. at most 3024.
\end{proof}

\begin{lem} \label{lem4.6}
There are only finitely many points of Type $B_1$ in $\mN$.
\end{lem}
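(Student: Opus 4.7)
The plan is to show that any point $x$ of Type $B_1$ in $\mathcal{N}$ has only finitely many neighbors. Since $\mathcal{N}$ has order $(2,t)$, this forces the common parameter $t$ to be finite, and the neighborhood growth estimate used in the proof of Lemma~\ref{new} then forces $\mathcal{N}$ itself to be finite, so in particular $|B_1|<\infty$. If $\mathcal{N}$ contains no point of Type $B_1$ there is nothing to prove, so fix $x$ of Type $B_1$ and let $x'$ be its unique $\mathcal{H}$-neighbor (given by $\mathcal{O}_{f_x}=\{x'\}$).

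The non-$B_1$ neighbors of $x$ are easy to handle. The only $A$-neighbor is $x'$; the neighbors of Types $B_2,B_3,B_4,B_5$ are finite in total by Lemma~\ref{lem4.5}; and from Table~\ref{tab:4} the only $\mathcal{V}$-line type joining a Type $B_1$ valuation to a Type $C$ valuation is $B_1B_3C$, so every $C$-neighbor of $x$ shares a line through $x$ with a $B_3$-neighbor, bounding the $C$-neighbors of $x$ by the finite count of $B_3$-neighbors.

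The main step is bounding the $B_1$-neighbors of $x$. For any such neighbor $u$ with unique $\mathcal{H}$-neighbor $u'$, either $u'=x'$ or $u'\neq x'$. In the first case, $x,x',u$ are pairwise collinear, so by the standard ``no triangle outside a line'' consequence of (NP2) for near polygons, they lie on a common line, which must be $xx'$; hence $u$ is the third point of $xx'$, giving at most one such $u$. In the second case, the triangle inequality gives $\dist(x,u') \leq \dist(x,u)+\dist(u,u')=2$, and combined with $f_x(u')=\dist(x,u')-1$ and $\mathcal{O}_{f_x}=\{x'\}$ this forces $f_x(u')=1$, so $u'$ is one of the $22$ points of $\mathcal{H}$ of $f_x$-value $1$ from Table~\ref{tab:3}. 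For each such $u'$, the candidate $u$ is a common neighbor of $x$ and $u'$ in $\mathcal{N}$, and Lemma~\ref{lem2.1} bounds the number of common neighbors of two points at distance $2$ by $5$. Thus $x$ has at most $1+22\cdot 5=111$ neighbors of Type $B_1$.

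Adding the contributions from all types, $x$ has only finitely many neighbors in $\mathcal{N}$, so $t+1$ is finite. The neighborhood growth estimate from the proof of Lemma~\ref{new} then shows that $\mathcal{N}$ is finite, and in particular $|B_1|<\infty$. The main obstacle is the $B_1$-neighbor bound above, where the crucial ingredients are the no-triangle property of near polygons (to dispose of the case $u'=x'$) and Lemma~\ref{lem2.1} (to control the case $u'\neq x'$).
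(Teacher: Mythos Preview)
Your argument is correct, but it takes a genuinely different route from the paper's. You bound the degree of a single Type~$B_1$ point $x$: the lone $A$-neighbour is $x'$; the $B_i$-neighbours with $i>1$ are finite by Lemma~\ref{lem4.5}; every $C$-neighbour sits on a $B_1B_3C$-line through $x$ and hence is paired with a $B_3$-neighbour; and each $B_1$-neighbour $u$ either is the third point of $xx'$ or has its $\mathcal{H}$-foot $u'$ among the $22$ points of $f_x$-value~$1$, with at most five common neighbours of $x$ and $u'$ by Lemma~\ref{lem2.1}. You then exploit the standing hypothesis that $\mathcal{N}$ has order $(2,t)$ to conclude that $t$ itself is finite, and the growth estimate from the proof of Lemma~\ref{new} gives $|\mathcal{N}|<\infty$. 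This is shorter than the paper's argument and in fact proves more in this case (all of $\mathcal{N}$ is finite as soon as a $B_1$-point exists, so Lemma~\ref{lem4.7} becomes redundant in that branch).

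The paper proceeds instead by contradiction, without ever invoking the constancy of $t$. Assuming infinitely many $B_1$-points, some fixed Type~$B_1$ valuation $f$ is induced infinitely often; for each $x$ with $f_x=f$ one uses the $16$ points of $\mathcal{H}$ at distance~$2$ from $x$ that are not collinear with $x'$ to produce, via Table~\ref{tab:4}, at least two lines through $x$ that are neither of Type $AB_1B_1$ nor of Type $B_1B_1B_1$, hence each containing a point of $\mathcal{B}=B_2\cup B_3\cup B_4\cup B_5$. Thus every such $x$ lies between two points of the finite set $\mathcal{B}$ at mutual distance~$2$, forcing some pair in $\mathcal{B}$ to have infinitely many common neighbours, contrary to Lemma~\ref{lem2.1}. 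The trade-off is that the paper's proof applies verbatim to a near hexagon with three points per line but no assumed constant $t$ (as in the statement of Main Result~1), whereas your argument depends essentially on that constancy to pass from a local degree bound to global finiteness.
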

\begin{proof}
Let $\mathcal{B}$ denote the set of those points of $\mathcal{N}$ that have type $B_i$ for some $i \in \{ 2,3,4,5 \}$. Then $\mathcal{B}$ is finite by Lemma~\ref{lem4.5}. Let $\mathcal{A}$ denote the set of those points of $\mathcal{N}$ that have type $A$, i.e., the points of $\mathcal{H}$. Then the set $\mathcal{A} \cup \mathcal{B}$ is also finite. Let $x$ be a point of type $B_1$ in $\mathcal{N}$. Then by Lemma \ref{lem4.2}, $x$ is at distance 1 from $\mathcal{H}$, and since $\mathcal{O}_{f_x}$ is a singleton, there exists a unique point $\pi(x)$ in $\mathcal{H}$ collinear with $x$. If $x$ is only collinear with points of type $A$, $B_1$ or $C$, then by the same reasoning as in the proof of Theorem \ref{theo4.4}, we get a contradiction. So, $x$ is collinear with at least one point of $\mathcal{B}$, and we have already seen that it is collinear with at least one point of $\mathcal{A}$. Thus $x$ is the common neighbour of two points at distance 2 in the finite set $\mathcal{A} \cup \mathcal{B}$. Since each such pair of points at distance 2 in the near polygon $\mathcal{N}$ has finitely many (at most five) common neighbours, we see that the set of points of type $B_1$ must be finite; in fact, the cardinality of this set is bounded by five times the number of unordered pairs of points at distance 2 in $\mathcal{A} \cup \mathcal{B}$.
\end{proof}

\begin{lem} \label{lem4.7}
There are only finitely many points of Type C in $\mN$.
\end{lem}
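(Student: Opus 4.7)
The plan is to assume $|S_C|$ is infinite and derive a contradiction by combining a double-counting argument, pigeonhole on the finite set of Type $B$ points, and Lemma~\ref{lem2.1}.

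First I would show that every $x \in S_C$ has at least three distinct Type $B$ neighbors in $\mathcal{N}$. By Lemma~\ref{lem4.2}, $d(x,\mathcal{H}) = 2$, so $\mathcal{O}_{f_x}$ consists of $21$ points of $\mathcal{H}$ lying at distance exactly $2$ from $x$. For each $p \in \mathcal{O}_{f_x}$ there is at least one common neighbor $u$ of $x$ and $p$, and any such $u$ must lie outside $\mathcal{H}$ (otherwise $d(x,\mathcal{H}) \leq 1$), hence is of Type $B$. A single Type $B$ neighbor $u$ of $x$ can serve as common neighbor with at most $|\mathcal{O}_{f_u}|$ points of $\mathcal{H}$, and by Table~\ref{tab:3} we have $|\mathcal{O}_{f_u}| \leq 9$ for every Type $B$ valuation. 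Double counting the pairs $(p,u)$ with $p \in \mathcal{O}_{f_x}$ and $u$ a Type $B$ common neighbor of $x$ and $p$ yields at least $\lceil 21/9 \rceil = 3$ distinct Type $B$ neighbors of $x$.

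Next, since each line of $\mathcal{N}$ through $x$ contains only two other points, these three Type $B$ neighbors span at least two distinct lines through $x$. I would therefore pick, for each $x \in S_C$, an unordered pair $\{u_x, v_x\}$ of Type $B$ neighbors of $x$ lying on two different lines through $x$. Applying (NP2) to the hypothetical line through $u_x$ and $v_x$: since both are at distance $1$ from $x$ and that line cannot pass through $x$, the uniqueness of the nearest point on any line forces $u_x$ and $v_x$ to be non-collinear. Hence $d(u_x, v_x) = 2$ (via the path $u_x - x - v_x$).

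Finally, by Lemmas~\ref{lem4.5} and~\ref{lem4.6} the set of Type $B$ points of $\mathcal{N}$ is finite, so the collection of possible pairs $\{u_x, v_x\}$ lives in a finite set. Pigeonhole then gives a single pair $\{u,v\}$ that arises as $\{u_x,v_x\}$ for infinitely many $x \in S_C$; each such $x$ is a common neighbor of the fixed pair $(u,v)$. Since $d(u,v) = 2$, Lemma~\ref{lem2.1} says $u$ and $v$ have at most five common neighbors in $\mathcal{N}$, yielding the desired contradiction. I expect the only delicate step is pinning down the double count leading to the bound of three Type $B$ neighbors; once the inequality $21 > 2 \cdot 9$ is in hand, the rest of the argument is a clean pigeonhole-plus-Lemma~\ref{lem2.1} finish analogous to the end of Lemma~\ref{lem4.6}.
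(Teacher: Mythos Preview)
Your proof is correct and follows essentially the same strategy as the paper: show that each Type~$C$ point $x$ has Type~$B$ neighbors on at least two distinct lines, then pigeonhole on the finite set of unordered pairs of Type~$B$ points at distance~$2$ and invoke Lemma~\ref{lem2.1}. The only difference is in how many Type~$B$ neighbors are obtained: the paper argues that the $21$ points of $\mathcal{O}_{f_x}$ yield $21$ distinct neighbors of $x$ in $\Gamma_1(\mH)$, whereas your double count (bounding each such neighbor's contribution by $|\mathcal{O}_{f_u}|\le 9$) gives at least $\lceil 21/9\rceil=3$, which already suffices and is arguably the more careful version of the same step.
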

\begin{proof}
Let $x$ be a point of type $C$ in $\mathcal{N}$. Then the set of points of $\mathcal{H}$ at distance 2 from $x$ is a 1-ovoid of $\mathcal{H}$ and hence it has cardinality 21. Let $S_x$ be the set of common neighbours between $x$ and the elements of $\mathcal{O}_{f_x}$ (the 1-ovoid of $\mathcal{H}$ induced by $x$). By Lemma \ref{lem4.2}, each element $y$ of $S_x$ has type $B_i$ for some $i \in \{ 1,2,\ldots,5 \}$ and hence by Table \ref{tab:3}, $y$ is collinear with at most nine points of $\mathcal{H}$. Therefore, $|S_x| \geq \frac{21}{9}$, and we get two points of the set $\Gamma_1(\mathcal{H})$ at distance 2 from each other having $x$ as a common neighbour. By Lemma \ref{lem4.5} and Lemma \ref{lem4.6}, the set $\Gamma_1(\mathcal{H})$ is finite. A similar reasoning as in the proof of Lemma \ref{lem4.6} then shows that there are only finitely many points of type $C$ in $\mathcal{N}$.
\end{proof}

\bigskip \noindent Observe also that the Type A points are precisely the points of $\mathcal{H}$. So, from Lemmas \ref{lem4.5}, \ref{lem4.6} and \ref{lem4.7}, we can conclude the following:

\begin{cor} \label{co4.8}
There does not exist any infinite near hexagon of order $(2,t)$ that has an isometrically embedded subgeometry isomorphic to $H(2)$. 
\end{cor}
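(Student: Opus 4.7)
The plan is to simply collate the finiteness statements already established. By Lemma~\ref{lem4.1}, every point of $\mathcal{N}$ belongs to one of the seven isomorphism classes of valuations of $H(2)$ listed in Table~\ref{tab:3}, namely Type $A$, Type $B_i$ for some $i \in \{1,2,3,4,5\}$, or Type $C$. Partition the point set of $\mathcal{N}$ accordingly.

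First I would observe that the set of Type $A$ points coincides with the point set of $\mathcal{H}$. Indeed, by Lemma~\ref{lem4.2} a point of $\mathcal{H}$ induces a classical valuation (necessarily of Type $A$ by Table~\ref{tab:3}), while a point outside $\mathcal{H}$ lies at distance $1$ or $2$ from $\mathcal{H}$ and hence induces a valuation of Type $B_i$ or Type $C$ respectively. Since $\mathcal{H} \cong H(2)$ has $63$ points, there are only finitely many Type $A$ points.

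Next, Lemma~\ref{lem4.5} bounds the number of points of Type $B_i$ with $i \in \{2,3,4,5\}$ by a finite constant (at most $3024$). Lemma~\ref{lem4.6} then shows that the number of points of Type $B_1$ is finite, and finally Lemma~\ref{lem4.7} gives finiteness of the set of Type $C$ points. Summing these seven finite quantities bounds $|\mathcal{N}|$, so $\mathcal{N}$ itself is finite, contradicting the assumption of infiniteness. Since no step requires any new argument beyond citing the previous three lemmas together with Lemmas~\ref{lem4.1} and \ref{lem4.2}, there is no real obstacle; the only point requiring a line of justification is the identification of Type $A$ points with the points of $\mathcal{H}$, which follows at once from the definition of the induced valuation and the isometric embedding of $\mathcal{H}$.
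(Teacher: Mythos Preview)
Your proposal is correct and follows essentially the same approach as the paper: the paper notes just before the corollary that the Type~$A$ points are precisely the points of $\mathcal{H}$, and then cites Lemmas~\ref{lem4.5}, \ref{lem4.6} and \ref{lem4.7} to conclude. Your write-up merely expands the justification that Type~$A$ points coincide with the points of $\mathcal{H}$, which is exactly the observation the paper records in the sentence preceding the corollary.
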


\appendix

\section{Computer computations}

In this appendix, we briefly discuss the algorithms and GAP code we used to obtain the computer results described in Sections \ref{sec3.1} and \ref{sec4.1}. The whole computer code can be found in \cite{Bi-bdb} (there are two independent GAP codes, written by the two authors).

In order to do computer calculations inside the generalized hexagons, we need computer models for them. There are two generalized hexagons of order $2$, the split Cayley hexagon $H(2)$ and its dual $H^D(2)$. Both of them have $63$ points. The full automorphism group for each of them  acts primitively and distance-transitively on the points. Therefore, these permutation groups can be found in the GAP (cf.~\cite{gap}) library of primitive groups. 

For $H(2)$, the following code implements the action of its automorphism group \verb|g| on the point set (which is identified here with the set $\{ 1,2,\ldots,63 \}$):
\begin{verbatim}
g:=AllPrimitiveGroups(DegreeOperation,63)[4];
\end{verbatim}
For $H^D(2)$, the following code implements the action of its automorphism group \verb|g| on the point set (which is also identified here with the set $\{ 1,2,\ldots,63 \}$):
\begin{verbatim}
g:=AllPrimitiveGroups(DegreeOperation,63)[2];
\end{verbatim}
Having computer models for these permutation groups, it is not hard any more to implement computer models for the associated generalized hexagons (with point set $\{ 1,2,\ldots,$ $63 \}$, line set \verb|lines| and distance function \verb|dist(.,.)|):

\begin{verbatim}
orbs := Orbits(Stabilizer(g,1),[1..63]);
dist1 := Filtered(orbs,x->Size(x)=6)[1];
dist2 := Filtered(orbs,x->Size(x)=24)[1];
dist3 := Filtered(orbs,x->Size(x)=32)[1];
partition := [[1],dist1,dist2,dist3];
perp := Union([1],dist1);
r := RepresentativeAction(g,1,partition[2][1]);
line := Intersection(perp,OnSets(perp,r));
lines := Orbit(g,line,OnSets);
DistMat := NullMat(63,63);
for x in [1..63] do
 r := RepresentativeAction(g,x,1);
 for y in [1..63] do
  z := y^r; 
  i := 1; while not(z in partition[i]) do i := i+1; od;
  DistMat[x][y] := i-1;
 od;
od;
dist := function(x,y) 
return DistMat[x][y]; 
end;
\end{verbatim}

\medskip \noindent Now that we have computer models for the generalized hexagons, our next goal will be to determine the valuations in each of them. Since the number of points is relatively small (63),  these valuations can be determined with the aid of a backtrack algorithm. However, to create valuations of general point-line geometries with three points per line we would like to follow a uniform approach, an approach which not only seems more efficient to us, but would also work (i.e., produce results in reasonable time) for larger nice geometries.

The idea is as follows. With every valuation $f$ there is an associated hyperplane $H_f$. For partial linear spaces $\mathcal{S}$ having three points per line, it is an easy task (computationally) to create hyperplanes in a fast and direct way, without going through a potentially long backtrack process. The method to create these hyperplanes directly is based on the observation that a proper set $H$ of points of such a partial linear space $\mathcal{S}$ is a hyperplane if and only if the characteristic vector of its complement $\overline{H}$ is orthogonal over $\mathrm{GF}(2)$ with the characteristic vectors of all lines. The second author learned about this direct and efficient method for creating hyperplanes from Sergey Shpectorov a few years ago. This direct method can still easily produce all hyperplanes of certain geometries having more than 1000 points (a number which seems far too big for a backtrack search).

Based on the above method, Algorithm \ref{algo1} below determines representatives for the various isomorphism classes of hyperplane complements of the generalized hexagon $\mathcal{H} \in \{ H(2),H^D(2) \}$. 

\begin{algorithm}[!htbp]
\caption{Generating the hyperplane complements}
\label{algo1}
\begin{algorithmic}
\STATE \textbf{input:} the incidence matrix $M$
\STATE \textbf{output:} list of distinct representatives of the isomorphism classes of hyperplane complements
\STATE
\STATE $U \gets$ nullspace of $M$ over $\mathbb{F}_2$
\STATE $S \gets$ the set of points at distance $3$ from a fixed point
\COMMENT{such a set is always a hyperplane complement}
\STATE \textit{HypComp} $\gets$ [$S$]
\STATE \textit{balance} $\gets 2^{dim(U)} - 63 - 1$
\WHILE{\textit{balance} $> 0$}
\STATE pick a random non-zero vector in $U$
\STATE find the hyperplane complement $H$ corresponding to it
  \IF{$H$ is not isomorphic to any element of \textit{HypComp}}
  \STATE Add $H$ to \textit{HypComp}
  \STATE \textit{balance} = \textit{balance} $-$ Index$_g$(Stabilizer$_g$($H$))
  \ENDIF
\ENDWHILE
\RETURN \textit{HypComp}
\end{algorithmic}
\end{algorithm}

\medskip \noindent Using Algorithm \ref{algo1}, we found that $H(2)$ has up to isomorphism 25 hyperplanes and that $H^D(2)$ has up to isomorphism 14 hyperplanes. We wish to note here that Frohardt and Johnson \cite{Fr-Jo} also classified all hyperplanes of $H(2)$ and $H^D(2)$, finding the same number of hyperplanes. Now that we have determined all hyperplane complements, we wish to know how many of them are associated with valuations, noting that it is possible that certain hyperplane complements are associated with more than one valuation. We achieve this goal by means of Algorithm \ref{algo2} below.

We define a \textit{partial valuation} of $\mH$ as a function $f$ defined on a subspace $S_f$ of $\mH$ such that $f$ is a semi-valuation on the subgeometry of $\mH$ induced on $S_f$. In Algorithm~\ref{algo2} we use a function \verb|AssignValue| which takes as input a function $f$ defined on a set  $S$ of points, a point $x$ and a value $i$, and finds a partial valuation $g$ such that: (i) $S_g$ is the smallest subspace containing $S$ and $\{ x \}$; (ii) $g(y) = f(y)$ for all $y \in S$; (iii) $g(x) = i$. If such a function $g$ does not exist then \verb|AssignValue| returns \verb|fail|. This function can be implemented easily by using the fact that if two points of a line $L$ have been assigned a value then the value of the third point is uniquely determined. 

\begin{algorithm}[!htbp]
\caption{Create valuations}
\label{algo2}

\begin{algorithmic}
\STATE \textbf{input:} a hyperplane complement $H$
\STATE \textbf{output:} list of all semi-valuations $f$ for which $\comp{H_f} = H$ and for which the maximal value is equal to 0
\STATE
\STATE $val \gets$ function which is $0$ on all points of $H$ and undefined on other points 
\STATE $f = $ \textit{AssignValue($val$, $H[1]$, 0)} 
\STATE \textit{complete} $\gets$ []
\STATE \textit{incomplete} $\gets$ []
\IF {$f$ is defined on all points}
\STATE Add $f$ to \textit{complete} 
\ELSE
\STATE Add $f$ to \textit{incomplete} if $f$ is distinct from \textit{fail}
\ENDIF

\WHILE {\textit{incomplete} is not empty}
\STATE \textit{val} $\gets$ an element popped from \textit{incomplete}
  \STATE $x \gets$ a random point for which \textit{val} is not defined 
  \FOR {$i$ in $\{ -1,-2,-3 \}$} 
  \STATE $f \gets$ \textit{AssignValue(\textit{val}, $x$, i)}
    \IF {$f$ is defined on all points}
    \STATE Add $f$ to \textit{complete} 
    \ELSE
    \STATE Add $f$ to \textit{incomplete} if $f$ is distinct from \textit{fail}
    \ENDIF
  \ENDFOR
\ENDWHILE
\RETURN \textit{complete}
\end{algorithmic}

\end{algorithm}

\medskip \noindent Using Algorithm~\ref{algo2} we found that from the 25 nonisomorphic hyperplanes of $H(2)$, seven are associated with valuations. These valuations can be found in Table \ref{tab:3}, where they have been given a type (Type A, $B_1$, ..., $B_5$, C). Six of these seven hyperplanes are associated with a unique valuation, while the seventh (Type $B_4$) is associated with two valuations. These two valuations are however isomorphic.  

From the 14 nonisomorphic hyperplanes of $H^D(2)$, four are associated with valuations. These valuations can be found in Table \ref{tab:1}, where they have been given a type (Type A, B, C, D). Three of these four hyperplanes are associated with a unique valuation, while the fourth (Type B) is associated with two valuations. Again, these two valuations are isomorphic.

\medskip \noindent We have also written GAP code to verify whether two valuations are isomorphic, to determine the type of a given valuation (based on the properties mentioned in Tables \ref{tab:1} and \ref{tab:3}), to verify whether two valuations are neighboring (with the function \verb|AreNeighboring|) and to determine the valuation $f_1 \ast f_2$ if $f_1$ and $f_2$ are two distinct neighboring valuations (with the function \verb|ThirdFromTwo|). With these functions we were able to verify all the claims made in Sections \ref{sec3.1} and \ref{sec4.1}. We refer to \cite{Bi-bdb} for more details.

To verify certain properties of the valuation geometry $\mathcal{V}$, it might be handy to have a routine to create subgeometries. Given a set $S$ of mutually distinct valuations, the following algorithm computes the valuation geometry $\mV_S$ which is defined as the subgeometry of $\mV$ whose points are the elements of $S$ and whose lines are all the lines of $\mV$ which are completely contained in $S$. The valuation geometry $\mV_S$ is constructed in such a way that its point set is equal to $\{ 1,2,\ldots,|S| \}$.

\begin{algorithm}[!htbp]
\caption{Construct valuation geometry}
\label{algo3}
\begin{algorithmic}
\STATE \textbf{input:} a set $S$ of valuations
\STATE \textbf{output:} points $P$, lines $L$ and the collinearity graph $\Gamma$ of $\mV_S$
\STATE
\STATE $P \gets [1,2,\ldots,Size(S)]$, $L \gets []$, $Adj \gets$ $0$-matrix of size $Size(S) \times Size(S)$ 

\FOR {$\{ i,j \}$ in $\binom{P}{2}$}
 \IF {\textit{AreNeighboring($S[i]$, $S[j]$)}}
 \STATE \textit{val} $\gets$ \textit{ThirdFromTwo($S[i]$, $S[j]$)}
   \IF {\textit{val} $\in S$}
    \STATE $k \gets$ position of \textit{val} in $S$ 
    \STATE Add $\{i,j,k\}$ to $L$
    \STATE $Adj[i][j] = Adj[j][i] = 1$. 
   \ENDIF
 \ENDIF 
\ENDFOR
\STATE construct graph $\Gamma$ from the adjacency matrix $Adj$ 
\RETURN $P$, $L$, $\Gamma$
\end{algorithmic}
\end{algorithm}

\end{document}